\DeclareMathOperator\sd{sd}
\DeclareMathOperator{\conv}{conv}
\newcommand\eps{\ensuremath{\varepsilon}}
\newcommand{\pth}[1]{\left( #1 \right)}
\newcommand{\nontrivial}{nontrivial}
\newcommand{\simplex}[1]{\Delta_{#1}}
\newcommand{\skel}[2]{#2^{(#1)}}
\newcommand{\skelsim}[2]{\skel{#1}{\simplex{#2}}}
\newcommand\R{\ensuremath{\mathbb{R}}}
\newcommand\N{\ensuremath{\mathbb{N}}}
\newcommand\Z{\ensuremath{\mathbb{Z}}}
\newcommand\Q{\ensuremath{\mathbb{Q}}}
\newcommand{\Rspace}{\mathbf{R}}
\newcommand\A{\mathcal{A}}
\newcommand\F{\mathcal{F}}
\newcommand\G{\mathcal{G}}
\newcommand\C{\mathcal{C}}
\newcommand{\heading}[1]{\vspace{1ex}\par\noindent{\bf\boldmath #1}}
\theoremstyle:=definition,remark,plain\do{%
        \expandafter\g@addto@macro\csname th@\theoremstyle\endcsname{%
            \addtolength\thm@preskip\parskip
            }%
        }
\newtheorem{theorem}{Theorem}[section]
\newtheorem{lemma}[theorem]{Lemma}
\newtheorem{proposition}[theorem]{Proposition}
\theoremstyle{definition}
\newtheorem{problem}{Problem}
\newtheorem{definition}[theorem]{Definition}
\newtheorem{remark}[theorem]{Remark}
\newtheorem{example}{Example}
\begin{document}

\title{Bounding Radon numbers via Betti numbers}
\author{Zuzana Pat\'akov\'a}

\date{}
\address{\noindent Department of Algebra, Faculty of Mathematics and Physics, Charles University, Sokolovská~83, 186 75 Praha, Czech Republic}
\email{patakova@karlin.mff.cuni.cz}

%\keywords{Radon number, homological non-embeddability, (p,q)-theorem, constrained chain map}

\begin{abstract}
We prove general topological Radon-type theorems for sets in $\mathbb R^d$ or on a surface. Combined with a recent result of Holmsen and Lee, we also obtain fractional Helly theorem, and consequently the existence of weak $\varepsilon$-nets as well as a $(p,q)$-theorem for those sets.

More precisely, 
given a family $\F$ of subsets of $\R^d$, we will measure the homological complexity of $\F$ by the supremum of the first $\lceil d/2\rceil$ reduced Betti numbers of $\bigcap \G$ over all nonempty $\G \subseteq \F$. We show that if  $\F$ has homological complexity at most $b$, the Radon number of $\F$ is bounded in terms of $b$ and $d$. 
In case that $\F$ lives on a surface and the number of connected components  of $\bigcap \mathcal G$ is at most $b$ for any nonempty $\G \subseteq \F$, then the Radon number of $\F$ is bounded  by a function depending only on $b$ and the surface itself.
 
 For surfaces, if we moreover assume the sets in $\F$ are open, we show  that the fractional Helly number of $\mathcal F$ is linear in $b$. The improvement is based on a recent result of the author and Kalai. Specifically, for $b=1$ we get that the fractional Helly number is at most three, which is optimal. This case further leads to solving a conjecture of Holmsen, Kim, and Lee about an existence of a $(p,q)$-theorem for open subsets of a surface. 
\end{abstract}

\maketitle

\section{Introduction}
Radon's theorem \cite{Radon1921} is a central result in convex geometry. It states
that it is possible to split any $d+2$ points in $\R^d$ into two disjoint parts whose convex hulls intersect. 
It is natural to ask what happens to the statement, if one replaces convexity with a more general notion.

Perhaps the most versatile generalization of the convex hull is the following. Let $X$ be an underlying set and let $\mathcal F$ be a family of subsets of $X$. Let $S\subseteq X$ be a set. The \emph{$\F$-hull $\operatorname{conv}_{\mathcal F}(S)$ of $S$ relative to $\mathcal F$} is defined as the intersection of all sets 
from $\mathcal F$ that contain $S$. If there is no such set, the $\F$-hull is, by definition, $X$.  This definition is closely related to so called  \emph{convexity spaces}, which we discuss in Section~\ref{ss:convexvsclosed}. Note that if $\F$ is a family of all convex sets in $\R^d$, then $\conv_\F$ coincides with the standard convex hull.

The \emph{Radon number} $r(\mathcal F)$ of  $\mathcal F$  is the smallest integer $r$ such that any set $S\subseteq X$ of cardinality $r$ can be split into two disjoint parts $S=P_1\sqcup P_2$ satisfying $\operatorname{conv}_{\mathcal F}(P_1)\cap \operatorname{conv}_{\mathcal F}(P_2)\neq \emptyset$.
If no such $r$ exists, we put $r(\F) = \infty$.

 Radon numbers and their relatives are studied not only in convex or discrete geometry \cite[etc.]{eckhoff,tverberg1, BB, eckhoff_polytopes, KK}, but they found applications also in computational geometry \cite{clarkson}, computational complexity \cite{Onn, Miller2010}, optimization \cite{Brosowski,amenta2}, robust statistics \cite{amenta}, operation research \cite{QueyranneTardella}, or parallel machine learning \cite{machine-learning}.
\medskip

In this paper we show that very mild topological conditions are enough to force a bound on Radon number for sets in Euclidean space (Theorem \ref{t:boundedRadon}) or on surfaces (Theorem~\ref{t:surf}). A simple trick allows us to give a version of the result for smooth manifolds or simplicial complexes, see Section \ref{s:embeddability}. 
In Section~\ref{s:direct_conseq} we list some important consequences, most notably fractional Helly theorems (Theorems~\ref{t:non-optimal-frachelly} and \ref{t:fract_helly}), which, with some further tools (Theorem~\ref{t:frachelly_surfaces}) lead to solving a conjecture of Holmsen, Kim, and Lee \cite[Conj.~5.3]{nerves_minors} (a special case of Theorem~\ref{t:pq_surface}). In Section~\ref{ss:convexvsclosed} we discuss a relation between our setting and convexity spaces.

\section{Results}
Radon's theorem \cite{Radon1921}  says that the family of convex sets in $\mathbb R^d$ has Radon number $d+1$, and this remains true after applying an arbitrary topological deformation to $\mathbb R^d$. 
 Hence, bounded Radon number should not be seen as a property that holds merely for the family of standard convex sets. 
In this paper we prove that the Radon number $r(\F)$ of a family $\F$ is bounded whenever $\F$ is \emph{``not too topologically complicated''}.
Let us first explain what ``not too topologically complicated'' means.

\heading{Homological complexity.}
Let $k \geq 1$ be an integer or $\infty$ and $\mathcal F$ a family of sets in a topological space~$\Rspace$. 
 We use the notation $\bigcap \F := \bigcap_{F \in \F} F$ as a shorthand for the intersection of a family of sets.
We consider singular homology with coefficients in $\Z_2$ and denote by $\tilde\beta_i(X; \Z_2)$ the $i$th reduced Betti number of $X \subseteq \Rspace$.
We define the \emph{$k$-level homological complexity} of $\F$ as:

\[
 \sup\left\{\widetilde\beta_i\left(\bigcap \G;\Z_2\right) \colon \emptyset \neq \G \subseteq \F, 0\leq i<k\right\}
\] 

and denote it by $HC_k(\F)$. We call the number $HC_\infty(\F)$ the \emph{(full) homological complexity}. 
\medskip
The definition makes sense also for other rings of coefficients. Note that even if the family $\F$ lives in $\R^d$, it may happen that $\tilde\beta_{k}(\bigcap \G; \mathbb Q) > 0$ for $k > d$ and some $\G \subsetneq \F$,
see \cite{Barratt_Milnor}, albeit we do not know whether this can also happen for $\Z_2$-coefficients.
In any case, for the reasons explained in Section~\ref{ss:Z2-coefficients}, we use the homology with $\Z_2$-coefficients throughout the paper.\\

Many families we are used to work with have bounded homological complexity. To name a few:
family of convex sets in $\R^d$, families of open sets in a topological space, where the intersection of each subfamily is either empty or homologically trivial (such families are called \emph{good covers}), families of spheres and pseudospheres in $\R^d$, families of algebraic sets in $\R^d$ defined by polynomials of degree at most $D$ (By Milnor \cite{Milnor}, the Betti numbers of any algebraic set in $\R^d$ can be upper bounded by a function depending only on $d$ and $D$. Specifically, the function does not depend on the number of defining polynomials.). 
Another example can be obtained as follows.
We consider a family of polytopes in $\R^d$ whose all normals lie in some fixed finite set $S$, together with their polyhedral subcomplexes. It was shown in \cite[Corollary 5]{hb17} that homological complexity of such a family is bounded by a function of $|S|$ and $d$. As a special case we get a family of axis-alligned hollow boxes.\\

We can now state our main theorem.
\begin{theorem}[Bounded mid-level homological complexity implies Radon]\label{t:boundedRadon}
For every non-negative integers $b$ and $d$ there is a number $r(b,d)$ such that the following holds:
 If $\mathcal F$ is a family of sets in $\R^d$ with $HC_{\lceil d/2 \rceil}(\F)\leq b$, then $r(\mathcal F)\leq r(b,d)$.
\end{theorem}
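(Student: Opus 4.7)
The plan is to argue by contradiction via topological obstruction theory. Suppose $\F$ admits a ``Radon-free'' point set $S=\{v_1,\ldots,v_N\}\subseteq\R^d$, meaning that for every partition $S=P_1\sqcup P_2$ one has $\conv_\F(P_1)\cap\conv_\F(P_2)=\emptyset$. Writing $C_\sigma:=\conv_\F\{v_i:i\in\sigma\}$ for $\sigma\subseteq[N]$ and using monotonicity of $\conv_\F$, the hypothesis extends to $C_\sigma\cap C_\tau=\emptyset$ for every pair of disjoint $\sigma,\tau\subseteq[N]$ (extend either side to a full partition). The aim is to force a contradiction once $N$ is large in terms of $b$ and $d$.

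The heart of the argument is the construction of a continuous ``Radon-compatible'' map
\[
  \phi:|\skelsim{k}{N-1}|\longrightarrow\R^d,\qquad k=\lceil d/2\rceil,
\]
with $\phi(i)=v_i$ on vertices and $\phi(|\sigma|)\subseteq C_\sigma$ on every face $\sigma$. Granting such a $\phi$, the topological van Kampen--Flores theorem guarantees, for $N$ above a threshold of order $O(d)$, two vertex-disjoint faces $\sigma,\tau$ with $\phi(|\sigma|)\cap\phi(|\tau|)\neq\emptyset$. Any point in that intersection lies in $C_\sigma\cap C_\tau$, contradicting the disjoint-hulls consequence of Radon-freeness. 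Thus the task reduces to building $\phi$.

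I would build $\phi$ by induction on skeletal dimension. Extending across a $j$-face $\sigma$ amounts to filling the already-defined $\phi|_{\partial|\sigma|}:S^{j-1}\to C_\sigma$, and the obstruction to filling sits in $\pi_{j-1}(C_\sigma)$ whose mod-$2$ Hurewicz image lies in $\widetilde H_{j-1}(C_\sigma;\Z_2)$. The topological complexity hypothesis bounds each such group by $b$ for $j-1<k$, but \emph{bounded} is not \emph{zero}. I would convert bounded to zero by a hypergraph-Ramsey pigeonhole applied skeleton by skeleton: at each level $j\leq k$, classify the $j$-faces of $\simplex{N-1}$ by their obstruction class in the (finitely valued) $\widetilde H_{j-1}(C_\sigma;\Z_2)$, pass to a large monochromatic induced sub-simplex, and exploit mod-$2$ coefficients to pair cobounding faces so that their joint obstruction cancels. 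Iterating across the $k$ relevant skeletal levels inflates $N$ from the van Kampen--Flores threshold $O(d)$ up to some $N'=N'(b,d)$, which becomes the Radon bound $r(b,d)$.

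The main obstacle will be coordinating these Ramsey choices coherently across exponentially many faces and $k$ skeletal levels while keeping the partial maps $\phi$ mutually compatible on shared boundaries; separately, bridging the gap that obstruction theory lives in $\pi_*$ whereas only $\widetilde H_*(\cdot;\Z_2)$ is hypothesized bounded, which likely requires chain-level arguments on fine cellular refinements. A cleaner alternative I would explore in parallel is to apply the quantitative nerve theorem of the author and Kalai alluded to in the abstract: it should let one replace each intersection $C_\sigma$ by a topologically tame proxy in which the relevant filling obstructions vanish outright, bypassing the Ramsey layer altogether and yielding a sharper bound.
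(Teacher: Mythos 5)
Your high-level skeleton (Radon-free set $\Rightarrow$ map of the $\lceil d/2\rceil$-skeleton of a large simplex with each face sent into the corresponding relative convex hull $\Rightarrow$ van Kampen--Flores contradiction, with a Ramsey layer to cope with Betti numbers that are bounded rather than zero) does match the architecture of the paper's proof. But the step you yourself flag as an obstacle is not a technicality to be bridged later --- it is the central difficulty, and as written your plan fails there. You cannot build a genuine continuous $\phi$ by obstruction theory: the obstruction to extending over a $j$-face lives in $\pi_{j-1}$ of the set $\conv_\F\Phi(\sigma)$, and bounded $\widetilde\beta_i(\cdot;\Z_2)$ gives no control over homotopy groups whatsoever. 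Worse, your proposed fix --- ``pair cobounding faces so that their joint obstruction cancels'' --- has no meaning for a continuous extension problem: a continuous map extends over a face only if its own obstruction vanishes, and obstructions of different faces cannot be traded against each other. The cancellation idea only makes sense at the chain level with $\Z_2$ coefficients, and that is exactly the paper's resolution: one abandons continuous maps entirely, works with \emph{nontrivial chain maps} constrained by $(\F,P)$ (Definition~\ref{d:constr_chain_map}), makes each $k$-simplex $\sigma$ of $K$ map to the sum of the $k$-simplices of its barycentric subdivision, arranges (via Ramsey) that these carry the same homology class in $\conv_\F\Phi(\sigma)$, and uses that their number is even so the classes cancel over $\Z_2$ (Proposition~\ref{p:ccm}). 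The price is that the final contradiction can no longer invoke the topological van Kampen--Flores theorem for maps; one needs its homological counterpart for nontrivial chain maps (Theorem~\ref{c:nohomrep}, Corollary~13 of~\cite{hb17}), which is a nontrivial input you neither cite nor prove.

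Two further points. First, your Ramsey step is underspecified in a way that matters: the ``color'' of a $k$-cycle is its homology class computed \emph{inside} $\conv_\F\Phi(\tau)$, which depends on the ambient face $\tau$; the same cycle can be trivial in one hull and nontrivial in a larger one. A plain monochromatic-subset Ramsey therefore does not suffice; the paper needs the stronger Proposition~\ref{p:ramsey_selection}, with a separate coloring $\rho_V$ for every $V$ and pairwise disjoint attachment sets $M_Z$, precisely so that the map $\Phi$ still satisfies $\Phi(\sigma\cap\tau)=\Phi(\sigma)\cap\Phi(\tau)$. Second, your fallback via the ``quantitative nerve theorem'' of Kalai and the author is not applicable here: that result (\cite{planar_sets}) bounds face numbers of nerves of families in surfaces and is used in this paper only for the fractional Helly bootstrapping on surfaces (Proposition~\ref{p:bootstraping}); it does not provide ``topologically tame proxies'' for the sets $\conv_\F\Phi(\sigma)$ in $\R^d$, and it cannot remove the homotopy-versus-homology gap that blocks your obstruction-theoretic construction.
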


We note that the bound $r(b,d)$ that our proof provides is very large (but still elementary recursive) as it relies on successive applications of Ramsey's theorem.
 On the other hand, the bound on $r(\F)$ is qualitatively sharp in the sense that all $\widetilde \beta_i$, $0 \leq i < \lceil \frac{d}{2}\rceil$, need to be bounded in order to have a universal bound on the Radon number. We discuss this in Section~\ref{s:discussion}, Example \ref{ex:sharpness}.

\subsection{Embeddability}\label{s:embeddability}
We have seen that for a family of sets in $\R^d$, in order to have a bounded Radon number, it suffices to restrict the reduced Betti numbers up to dimension $\lceil d/2 \rceil -1$. Which Betti numbers do we need to restrict, if we replace $\R^d$ by some other topological space $\Rspace$?  The following paragraphs provide some simple bounds if $\Rspace$ is a simplicial complex or a smooth real manifold.
The base for the statements is the following simple observation: Given a topological space $\Rspace$ embeddable into $\R^d$, we may view any subset of $\Rspace$ as a subset of $\R^{d}$ and use Theorem~\ref{t:boundedRadon}. 

Since  any (finite) $k$-dimensional simplicial complex embeds into $\R^{2k+1}$, we have: 
\begin{itemize}
 \item  If $K$ is a (finite) $k$-dimensional simplicial complex and $\F$ is a family of sets in $K$ with $HC_{k+1}(\F)\leq b$, then $r(\F)\leq r(b,2k+1)$. 
\end{itemize}
This result is again qualitatively sharp, i.e. all $\tilde \beta_i,  0 \leq i \leq k$, need to be bounded. In fact, it is enough to slightly modify  Example \ref{ex:sharpness} to show this. Namely, we consider the ambient space to be the union of the stated sets (taken as a simplicial complex) and not $\mathbb R^d$.

Using the strong Whitney's embedding theorem~\cite{whitneyimbedding}, stating that any smooth real $k$-dimensional manifold embeds into $\R^{2k}$, we obtain the following:
\begin{itemize}
  \item If $M$ is a smooth $k$-dimensional real manifold and $\mathcal F$ is a family of sets in $M$ with $HC_{k}(\F)\leq b$, then $r(\F)\leq r(b,2k)$.
 \end{itemize}
 
 Unlike in the previous statements we do not know whether bounding all reduced Betti numbers $\widetilde \beta_i$, $0 \leq i \leq k-1$, is necessary. The following result about surfaces indicates that it possibly suffices to bound less.
 Let $\F$ be a family of sets in a surface $S$, where by a \emph{surface} we mean a compact two-dimensional real manifold. 
In order to have a bounded Radon number $r(\F)$, it is enough to require that $HC_{1}(\F)$ is bounded,
that is, it only suffices to have a universal bound on the number of connected components.
\begin{theorem}\label{t:surf}
 For each surface $S$ and each integer $b\geq 0$ there is a number $r_S(b)$ such that each family $\F$ of sets in $S$ satisfying $HC_{1}(\F)\leq b$ 
 has $r(\F)\leq r_S(b)$.
\end{theorem}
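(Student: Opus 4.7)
My plan is to revisit the proof of Theorem~\ref{t:boundedRadon} in the case $d=2$ and check that the same argument goes through, with at most $S$-dependent constants, when $\R^2$ is replaced by a surface $S$. The match of hypotheses is suggestive: surfaces are two-dimensional, and the assumption $TC_1(\F)\le b$ of Theorem~\ref{t:surf} coincides with the assumption $TC_{\lceil 2/2\rceil}(\F)\le b$ required by Theorem~\ref{t:boundedRadon} for $d=2$. The author's own phrasing (``the proof technique also works for surfaces'') further supports that no genuinely new ideas should be needed beyond the planar argument.

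Concretely, I would first separate the proof of Theorem~\ref{t:boundedRadon} into two components. The first is a purely combinatorial/homological step: given a large enough set $P\subseteq X$, associate to $P$ a derived complex built from the intersections $\operatorname{conv}_\F(\sigma)$, $\sigma\subseteq P$, and use the bound $TC_k(\F)\le b$ to control selected Betti numbers of this complex up to dimension $k-1$. This step uses nothing about the ambient space and transfers to $S$ verbatim. The second component is where the ambient dimension enters: one derives a contradiction from a non-embeddability/obstruction statement in dimension $d$. For $d=2$ this input is a planar obstruction-theoretic fact in dimension $0$, so only bounds on $\widetilde\beta_0$ of intersections are needed; in particular the obstruction cochain only ``sees'' connected components of intersections, which are perfectly well defined in $S$.

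The main obstacle is this last replacement: verifying that the planar obstruction indeed has a surface analog with only a quantitative loss. A tempting shortcut would be to cover $S$ by a finite (surface-dependent) collection of open disks, apply the planar argument in each chart, and glue. This is problematic because intersecting an element of $\F$ with a chart can split a connected component into many pieces, so the locally restricted family need not satisfy $TC_1\le b$. I would therefore aim for a direct intrinsic argument on $S$: since the relevant obstruction lives in dimension $0$, it depends only on $\widetilde\beta_0(\bigcap \G)$, which is bounded by hypothesis; the topological input from the ambient space reduces to a $2$-dimensional non-embedding statement that should hold for $S$ (with a constant depending on, say, $\chi(S)$ or the genus) because $S$ itself is $2$-dimensional. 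Pinning down this $S$-dependent constant---tracking precisely how the Euler characteristic or genus enters the argument that worked for $\R^2$---is the technical heart of the proof, and is the step I expect to require the most care.
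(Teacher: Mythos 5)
Your high-level decomposition is indeed the one the paper uses: the combinatorial/homological part (Proposition~\ref{p:ccm}) is stated for an arbitrary ambient space and only needs $TC_{\dim K}(\F)\leq b$, and the ambient space enters solely through a non-embeddability statement for a fixed complex $K$, via Proposition~\ref{p:gen}; since the relevant $K$ is a graph, bounding $\widetilde\beta_0$ of intersections (i.e.\ $TC_1$) suffices, exactly as you observe. The genuine gap is in the step you defer to the end: the existence, for each surface $S$, of a finite graph that does not almost-embed into $S$ is \emph{not} obtained by ``tracking how $\chi(S)$ or the genus enters the argument that worked for $\R^2$.'' Euler-characteristic edge counting rules out \emph{embeddings} of large complete graphs into $S$, but in an almost-embedding adjacent edges may cross arbitrarily often, so face-counting does not apply; and the obstruction behind Theorem~\ref{c:nohomrep} is a deleted-product/cohomological argument tailored to $\R^{2k}$ whose surface analogue is substantially harder (for instance, the strong Hanani--Tutte theorem fails on higher-genus surfaces, so one cannot reduce almost-embeddability into $S$ to embeddability into $S$ plus Euler counting). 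The paper does not prove this input at all: it imports it as a black box from~\cite{kuhnel_our} (with smaller such graphs given in~\cite{Patak_Tancer,rk:genus}). So your plan is structurally right, but what you call the technical heart is precisely an external, nontrivial theorem, not a quantitative rerun of the planar case, and your proposal contains no argument for it.

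A second, smaller point your plan does not address is why one may work with honest almost-embeddings here. For $\dim K\leq 1$, Proposition~\ref{p:ccm} produces a constrained chain map that is induced by an actual continuous map $f\colon |K|\to S$ (the endpoints of an edge land in the same path-component of the relevant set $\conv_\F\Phi(\tau)$, hence can be joined by a path), and the ``Moreover'' clause of Proposition~\ref{p:gen} then only requires a graph that does not \emph{almost-embed} into $S$. This refinement is what lets the paper invoke the cited surface obstruction directly; if you stay at the level of homological almost-embeddings, as in the $\R^d$ argument you propose to imitate, you would need the stronger statement that some graph admits no homological almost-embedding into $S$, which again is not something your outline supplies.
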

See Section~\ref{s:CCHM} for the proof. It would be very interesting to see what happens for manifolds of dimension three and more.

\begin{problem}
 Given a $d$-dimensional manifold $M$, 
decide whether $r(\mathcal F)$ is bounded for all families $\F \subseteq M$
with bounded $HC_{\lceil d/2 \rceil}(\F)$.
\end{problem}
Regarding the surface, the bound on $r_S(b)$ that our proof provides is again very large. If the surface is connected, the bound depends on $b$ and the Euler characteristics of $S$, for disconnected surfaces one has to compute the bound for each connected component separately and then take the maximum.

\begin{problem}
 For a connected surface $S$, is $r_S(b)$ polynomial in the Euler characteristics of~$S$?
\end{problem}

\subsection{Consequences and related results} \label{s:direct_conseq}
There are many useful parameters related to the Radon number. Let us list some of them. 
A family $\mathcal F$ has \emph{Helly number} $h(\mathcal F)$, if $h(\mathcal F)$ is the smallest integer $h$ with the following property:
If in a finite subfamily $\mathcal S\subseteq \mathcal F$ each $h$ members of $\mathcal S$ have a point in common, then all the sets of $\mathcal S$ have a point in common.
If no such $h$ exists, we put $h(\F) = \infty$.

A direct generalization of Radon numbers are the \emph{Tverberg numbers} (sometimes also called \emph{partition numbers}). Given an integer $k\geq 3$, we say that $\F$ has \emph{$k$th Tverberg number} $r_k(\F)$, if $r_k(\F)$ is the smallest integer $r_k$ such that any set $S \subseteq X$ of size $r_k$ can be split into $k$ pair-wise disjoint parts $S=P_1\sqcup P_2\sqcup\ldots \sqcup P_k$
satisfying $\bigcap_{i=1}^k \operatorname{conv}_{\mathcal F}P_i\neq \emptyset$. We set $r_k(\mathcal F)=\infty$ if there is no such $r_k$.

By older results\footnote{Some of these implications were formulated in the setting of \emph{convexity spaces}. Nevertheless, we show in Section~\ref{ss:convexvsclosed} that they also hold for arbitrary \emph{closure spaces}, which are in a direct correspondence with our setting. (We also provide definitions of convexity and closure spaces in that section).},
 bounded Radon number implies bounded Helly number \cite{Levi1951} as well as bounded Tverberg numbers  
 \cite[(6)]{jamison1981},  where the later bound was several times improved since then \cite{partition_conjecture, Bukh, domotor}. 
Combining these theorems with Theorems \ref{t:boundedRadon} and \ref{t:surf}, respectively, yields
 that constant mid-level homological complexity implies bounded Helly and Tverberg numbers, respectively.
From these four corollaries only the result that for sets in $\R^d$ bounded $HC_{\lceil d/2 \rceil} $ implies bounded Helly number has been shown earlier~\cite[Theorem 1]{hb17}.

Due to a recent result by Holmsen and Lee \cite[Theorem 1.1]{boundedRadon_fractHelly}, convexity spaces with bounded Radon number satisfy a fractional Helly property. 
Thus, Theorem \ref{t:boundedRadon} combined with \cite[Theorem 1.1]{boundedRadon_fractHelly} immediately gives the following fractional Helly theorem (we refer to the discussion in Section~\ref{ss:convexvsclosed} regarding the connection between our setting and convexity spaces).
\begin{theorem}\label{t:non-optimal-frachelly}
For every integers $b, d \geq 0$ and for every $\alpha \in (0,1)$ there exist  $\beta=\beta(\alpha,b,d) > 0$ and an integer $m = m(b,d)$ with the following property.
Let $\F$ be a family of sets in $\R^d$ with $HC_{\lceil d/2\rceil}(\F)\leq b$. Let $\mathcal G$ be a finite subfamily of $\F$ of $n \geq m$ sets. If  at least $\alpha\binom{n}{m}$ of the $m$-tuples of $\G$ have non-empty intersection, then there are at least $\beta n$ members of $\G$ whose intersection is non-empty.
 \end{theorem}
 
  The minimal number $m$ for which the theorem holds is called the \emph{fractional Helly number} of $\F$. That is, a family $\F$ has  fractional Helly number $m$ if $m$ is the minimal integer with the following property: for every $\alpha\in (0,1)$, there is $\beta=\beta(\alpha)>0$ such that the conclusion of Theorem \ref{t:non-optimal-frachelly} holds, i.e. given an $n$-element subfamily $\G$ of a family $\F$, if at least $\alpha\binom{n}{m}$ of the $m$-tuples of $\G$ have non-empty intersection, then there are at least $\beta n$ members of $\G$ whose intersection is non-empty.
\medskip

 As mentioned in the previous section, we may use Theorem \ref{t:boundedRadon} also for any topological space $\Rspace$ embeddable into $\R^d$ (e.g smooth real $d$-dimensional manifolds or finite simplicial complexes). This in turn provides corresponding fractional Helly theorems.  Grassmanians or flag manifolds can serve as examples of manifolds often encountered in geometry.
Using our tools we can e.g. obtain some fractional Helly-type statements for line-transversals.
Combining \cite[Theorem 1.1]{boundedRadon_fractHelly} with Theorem \ref{t:surf}, we immediately get a fractional Helly theorem for surfaces, which we formulate here for a further reference:

  \begin{theorem}\label{t:fract_helly}
  Let $S$ be a surface. Then
for every integer $b \geq 0$ and for every $\alpha \in (0,1)$ there exist $\beta=\beta(\alpha, b, S)$ and an integer $m=m(b,S)$ such that the following holds. 
Any family $\F$ of sets in $S$ with $HC_{1}(\F)\leq b$ has the fractional Helly number at most $m$.
 \end{theorem}

The existence of a fractional Helly theorem for sets with bounded homological complexity
might be seen as the most important application of Theorem \ref{t:boundedRadon}, not only because it implies an existence of weak $\eps$-nets and a $(p,q)$-theorem (which we both discuss later), but also in its own right. Its existence
answers positively a question by Matou\v sek (personal communication), also posed in \cite[Open Problem 3.6]{survey}. 

  We note that the bound $m(b,d)$ on the fractional Helly number we obtain from the proof is not optimal.
 So what is the optimal bound?
 The case of $(d-1)$-flats in $\R^d$ in general position shows that we cannot hope for anything better than $d+1$.
  In Section~\ref{s:pq_surface} we establish a reasonably small bound for a large class of families $\F$ of open subsets of surfaces  using  the result of the author and Kalai \cite{planar_sets}. In particular, for families $\F$ of open sets with $HC_1(\F) = 0$, we obtain the optimal bound.
 
 \begin{theorem}[fractional Helly for surfaces]\label{t:frachelly_surfaces}
  Let $b \geq 0$ be an integer. We set $m=3$ for $b=0$ and $m=2b+4$ for $b \geq 1$, respectively. Then for any surface $S$ and $\alpha \in (0,1)$ there exists $\beta = \beta(\alpha, b,S)>0$ with the following property. Let $\A$ be a family of $n$ open subsets of a surface $S$ with $HC_1(\A) \leq b$. If at least $\alpha \binom{n}{m}$ of the $m$-tuples of $\A$ are intersecting, then there is intersecting subfamily of $\A$ of size at least $\beta n$.
 \end{theorem}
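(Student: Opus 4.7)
The plan is to apply a bootstrapping strategy in which the role of the base case is played by the result of the author and Kalai \cite{planar_sets}. For the case $b=0$, every non-empty intersection of a subfamily of $\A$ is path-connected, so $\A$ is a good cover of a surface. The fractional Helly number $3$ for this situation is precisely the conclusion of \cite{planar_sets}, and this also settles the conjecture of Holmsen, Kim, and Lee mentioned in the introduction. Thus the claim for $k=3$ is immediate and is the easy end of the argument.

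For $b \geq 1$ I would first invoke Theorem \ref{t:surf} to get a bound on $r(\A)$, and then use the Holmsen--Lee transference \cite{boundedRadon_fractHelly} to obtain a colorful Helly theorem (with some number $m = m(b,S)$) and a non-optimal fractional Helly number $h_f^{(0)}(b,S)$. The aim is to sharpen $h_f^{(0)}$ down to $2b+4$ using the $b=0$ result as an oracle. To this end, note that every intersection of a subfamily decomposes into at most $b+1$ path-connected components, so one can form the enlarged family $\A'$ whose members are the connected components of the members of $\A$; one has $|\A'| \leq (b+1)n$ and every member of $\A'$ is path-connected.

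Next, given $\alpha \binom{n}{2b+4}$ intersecting $(2b+4)$-tuples in $\A$, I would apply pigeonhole over the $(b+1)^{2b+4}$ possible component-selections to extract a positive fraction $\alpha'\binom{|\A'|}{2b+4}$ of intersecting $(2b+4)$-tuples inside $\A'$. At this point I would use the colorful Helly theorem together with an Euler characteristic / Mayer--Vietoris argument on the surface $S$ to propagate path-connectedness from single members of $\A'$ to all relevant intersections: the point is that on a surface, restricting to components and using the colorful Helly theorem to align them across sufficiently many sets kills the first Betti number of the intersections, bringing us into the good cover regime and allowing the $b=0$ case of \cite{planar_sets} to apply. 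The conclusion then pulls back from $\A'$ to $\A$ via the bounded multiplicity $b+1$ of the component decomposition.

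The main obstacle is the last step: controlling the topology of intersections in $\A'$. A priori, intersections of path-connected sets on a surface need not be path-connected, and one must genuinely exploit the colorful Helly theorem and the low-dimensional topology of $S$ to force the good cover behaviour on a large sub-collection. The precise number $2b+4$, rather than some larger constant, should emerge from this topological bookkeeping, in which each unit of allowed $TC_1$ on the intersections is absorbed by exactly two extra sets in the tuple. Making this bookkeeping precise, and verifying that no slack is lost in the passage from $\A'$ back to $\A$, is where the technical heart of the proof lies.
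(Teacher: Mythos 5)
There is a genuine gap, and it starts at the very first step. You claim that for $b=0$ the fractional Helly number $3$ is ``precisely the conclusion of \cite{planar_sets}''. That is not what \cite{planar_sets} provides. The result the paper imports from there (stated as Theorem~\ref{t:GZ}) is an $f$-vector inequality: if $f_{k+1}(\A)=0$ then $f_k(\A)\leq c_1 f_{k-1}(\A)+c_2$, valid for $k\geq 2$ when $b=0$ and for $k\geq 2b+3$ when $b\geq 1$. This is not a fractional Helly statement and does not directly yield one, even in the good-cover case. The actual proof works uniformly in $b$: one combines the $f$-vector inequality with the Erd\H{o}s--Simonovits supersaturation theorem to obtain a bootstrapping step (Proposition~\ref{p:bootstraping}): $f_k(\A)\geq\alpha_1\binom{n}{k+1}$ forces $f_{k+1}(\A)\geq\alpha_2\binom{n}{k+2}$. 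Starting from $k_0=3$ (resp.\ $k_0=2b+4$) and iterating, one lifts the density hypothesis up to the non-optimal fractional Helly number $h_f$ supplied by Theorem~\ref{t:non-optimal-frachelly}, and then invokes that theorem. So you need the bootstrapping even when $b=0$.

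Your proposed route for $b\geq 1$ --- passing to the family $\A'$ of connected components, pigeonholing over component selections, and then invoking colorful Helly plus a Mayer--Vietoris argument to force good-cover behaviour --- is not the paper's route, and you have correctly identified its weak point yourself: intersections of path-connected sets on a surface need not be path-connected, and nothing in the colorful Helly theorem gives you control over $\widetilde\beta_0$ of those intersections. There is no argument here, only an aspiration, and the ``two extra sets per unit of $TC_1$'' heuristic you offer to explain the value $2b+4$ has no support in the proof. In the paper, $2b+4$ is simply one more than the threshold $k\geq 2b+3$ on which the $f$-vector inequality from \cite{planar_sets} is valid; no decomposition into components and no reduction to the $b=0$ case is performed.
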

It might be surprising that the bound on the fractional Helly number  does not depend on the surface $S$. What depends on $S$ is the actual value of $\beta$: This value depends on $\alpha$, $b$, and on the number of connected components of $S$ together with their Euler characteristics.

 We note that the statement of Theorem \ref{t:frachelly_surfaces} also holds for finite families of open sets in $\R^2$, 
 since the plane can be seen as an open subset of a 2-dimensional sphere. The proof of Theorem \ref{t:frachelly_surfaces} is given in Section~\ref{s:pq_surface}.
 
 We conjecture that the fractional Helly number of a family $\F$ 
 is independent of the homological complexity of $\F$, provided that it is bounded, and depends only on the topological space itself.
 
 \begin{problem}
Is the fractional Helly number in Theorems \ref{t:non-optimal-frachelly} and \ref{t:frachelly_surfaces}, respectively, independent of $b$? What can be said for manifolds?
 \end{problem}

 \heading{$(p,q)$-theorems.}
 Another important corollaries of Theorems \ref{t:boundedRadon} and \ref{t:surf} are so called $(p,q)$-theorems, which are, in fact, generalizations of Helly theorem.
  We say that a family $\mathcal F$ of subsets of $X$ has the \emph{$(p,q)$-property},
 if among every $p$ sets of $\F$, some $q$ have a point in common. The standard $(p,q)$-theorem \cite{alon-kleitman} says that for any finite family $\mathcal F$ of convex sets in $\mathbb R^d$ satisfying the $(p,q)$-property, a constant number of points in $\mathbb R^d$ is sufficient to intersect all sets in $\mathcal F$.
 
 It is well-known that an \emph{intersection closed} set system with a fractional Helly number $m$ in turn provides a $(p,q)$-theorem for $p \geq q \geq m$, see \cite[Theorems 8(i) and 9, and the discussion in §2.1]{transversal-hypergraph}. Thus, in order to show that a finite family $\mathcal F$ with bounded mid-level homological complexity enjoys a $(p,q)$-theorem, it is enough to apply Theorem \ref{t:non-optimal-frachelly} to the family $\mathcal F^{\cap}:=\{\bigcap_{S \in \mathcal G} S: \mathcal G \subseteq \mathcal F\}$ and observe that $\mathcal F^\cap$ has bounded mid-level homological complexity if and only if $\mathcal F$ does. Similarly, if $\mathcal F$ is a finite family of open subsets of a surface, we apply Theorem \ref{t:frachelly_surfaces} to $\mathcal F^\cap$ and obtain the following theorem.
 
 \begin{theorem} \label{t:pq_surface}
 Let $b \geq 0$ be an integer. Set $m=3$ for $b=0$ and $m=2b+4$ for $b \geq 1$, respectively.
  For any integers $p \geq m$ and a surface $S$, there exists an integer $C=C(p,b,S)$ such that the following holds. Let $\F$ be a finite family of open subsets of $S$ with $HC_1(\F) \leq b$. If $\F$ has the $(p,m)$-property, then there is a set of points that intersects all sets from $\F$ and has at most $C$ elements.
 \end{theorem}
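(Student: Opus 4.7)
The plan is to derive Theorem \ref{t:pq_surface} as an immediate corollary of Theorem \ref{t:frachelly_surfaces} via the general reduction of Alon, Kalai, Matou\v{s}ek, and Meshulam \cite{transversal-hypergraph}. That reduction establishes that once a family admits a bounded fractional Helly number, the $(p,q)$-property automatically forces the existence of a transversal of size bounded only in terms of $p$, $q$, and the fractional Helly data. Since Theorem \ref{t:frachelly_surfaces} supplies fractional Helly number $k=3$ if $b=0$ and $k=2b+4$ if $b\geq 1$ for any finite family of open subsets of a surface $S$ with $TC_1 \leq b$, plugging this into the framework should finish the argument.

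Concretely, I would proceed in three short steps. First, I would apply Theorem \ref{t:frachelly_surfaces} to $\F$: for every $\alpha \in (0,1)$ it yields a function $\beta=\beta(\alpha,b,S) > 0$ such that whenever at least $\alpha\binom{n}{k}$ of the $k$-tuples of $\F$ are intersecting, some point of $S$ lies in at least $\beta n$ members of $\F$. Second, I would feed this fractional Helly property, together with the $(p,q)$-property of $\F$, into the abstract $(p,q)$-theorem of \cite{transversal-hypergraph}; because $p \geq q \geq k$, the ``$q\ge h_f$'' prerequisite of that theorem is satisfied with $h_f=k$. Third, the output of \cite{transversal-hypergraph} is an integer $C$ depending only on $p$, $q$, $k$, and the function $\beta$, together with a transversal of size at most $C$; absorbing the $b$- and $S$-dependent constants into $C(p,q,S)$ gives the statement.

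The main (and essentially only) obstacle is a bookkeeping check: one must verify that the version of the Alon--Kalai--Matou\v{s}ek--Meshulam $(p,q)$-theorem being invoked does not silently require additional hypotheses beyond a bounded fractional Helly number. If auxiliary ingredients such as a colorful Helly theorem or a bounded Helly number turn out to be needed, they can be supplied for free from the present paper: Theorem \ref{t:surf} gives a bounded Radon number for open families on surfaces with $TC_1 \leq b$, and then Levi's theorem provides a bounded Helly number while Holmsen--Lee \cite[Lemma 2.3]{boundedRadon_fractHelly} provides colorful Helly. Once this compatibility is confirmed, the proof reduces to a single invocation of the two black boxes, and all the genuine work sits in Theorem \ref{t:frachelly_surfaces}.
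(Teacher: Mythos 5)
Your proposal matches the paper's proof: the paper also obtains Theorem \ref{t:pq_surface} by combining Theorem \ref{t:frachelly_surfaces} with the reduction of \cite{transversal-hypergraph}, where fractional Helly is the only ingredient needed for a $(p,q)$-theorem. Your extra caveat about auxiliary hypotheses is harmless but not needed.
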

 
 We note that the case $b=0$ in Theorem \ref{t:pq_surface} settles a conjecture by Holmsen, Kim, and Lee \cite[Conj. 5.3]{nerves_minors}.
 \begin{remark}
 A careful reader might notice a slightly different formulation of the conjecture \cite[Conj. 5.3]{nerves_minors} involving a parameter $q$: any finite family of open sets with trivial $HC_1(\F)$ and satisfying the $(p,q)$-property, $p\geq q\geq 3$, can be pierced by at most $C=C(p,q,S)$ elements. However, as noticed already in \cite{alon-kleitman}, since we are only interested in the existence of the constant $C(p,q,S)$ and not in its precise value, it is sufficient to consider the case $q=3$.                                                                                                                                       \end{remark}
 
 \heading{Weak $\eps$-nets.}
 Another important landmark in combinatorial convexity is a concept of \emph{weak $\varepsilon$-nets}. As shown in \cite{transversal-hypergraph}, they are closely related to fractional Helly property. We recall the definition first.
 Given a family $\F$ of subsets of $X$, $\varepsilon > 0$ and a finite subset $Y \subseteq X$, we say that $N \subseteq X$ is a \emph{weak $\varepsilon$-net} for $Y$ (with respect to $\F$) if it intersects every set from $\mathcal F$ containing at least $\varepsilon |Y|$ points of $Y$. One of the main problems in discrete geometry is to find weak $\varepsilon$-nets of small size, in particular when $\mathcal F$ is a family of convex sets in $\mathbb R^d$.

 Our Theorem \ref{t:non-optimal-frachelly} together with \cite[Theorem 9 and the discussion in §2.1]{transversal-hypergraph}
imply that for any integers $b \geq 0, d \geq 1$, there are positive constants $c_1$ and $c_2$ such that any family of subsets of $\mathbb R^d$ with $\lfloor d/2\rfloor$-level homological complexity bounded by $b$ admits a weak $\varepsilon$-net of size $c_1/\varepsilon^{c_2}$, where $c_1, c_2$ depends only on $b$ and $d$ (and the fractional Helly number $m$, but this is a function of $b$ and $d$). 
We refer to \cite[Thm 4.1]{boundedRadon_fractHelly} for a closely related result stating that in a convexity space with Radon number at most $r$ any finite subset of the underlying set admits a weak $\varepsilon$-net of size $c_1/\varepsilon^{c_2}$, where both $c_1, c_2$ are positive constants depending on $r$.

 \heading{Carath\'eodory number.}
We have seen that bounded homological complexity has many interesting consequences.
However, not all parameters of $\mathcal F$ can be bounded by the homological complexity alone. We show that the Carath\'eodory number is one such example.
A family $\mathcal F$ has \emph{Carath\'eodory number} $c(\mathcal F)$, if $c(\mathcal F)$ is the smallest integer $c$ with the following property:
For any set $S\subseteq X$ and any point $x\in\operatorname{conv}_{\mathcal F}(S)$, there is a subset $S'\subseteq S$ of size at most $c$ such that $x\in\operatorname{conv}_{\mathcal F}(S')$. 
If no such $c$ exists, we put $c(\F) = \infty$.

The following theorem shows that it is easy to construct an example of a finite $\mathcal F$ of trivial full-level homological complexity with arbitrarily high Carath\'eodory number. 

\begin{theorem}[Bounded homological complexity does not imply Carath\'eodory]\label{t:no_carat}
 For every positive integers $c\geq2$ and $d\geq2$ there is a finite family $\mathcal F$ of sets in $\R^d$ of full-level homological complexity zero, satisfying $c(\mathcal F)=c$.  
\end{theorem}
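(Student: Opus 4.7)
The plan is to realize $\mathcal{F}$ as a family of $c$ convex sets in $\R^d$, together with an explicit $c$-point configuration witnessing that the Carath\'eodory number is exactly $c$. Concretely, place $c$ points $p_1,\dots,p_c$ in $\R^d$ in convex position (possible for $d \ge 2$, e.g.\ on the moment curve), so that each $p_i$ is a vertex of $\conv\{p_1,\dots,p_c\}$ and in particular lies outside $C_i := \conv\{p_j : j \ne i\}$. For each $i$, let $A_i := C_i + B(0,\varepsilon)$ be the $\varepsilon$-neighborhood of $C_i$, with $\varepsilon > 0$ small enough that $p_i \notin A_i$; the $A_i$ are convex and bounded. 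Since they are bounded, we may pick an auxiliary point $x \in \R^d$ lying outside $\bigcup_i A_i$. Set $\mathcal{F} := \{A_1,\dots,A_c\}$.

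Topological complexity is immediate: for any $\mathcal{G} \subseteq \mathcal{F}$, the intersection $\bigcap \mathcal{G}$ is an intersection of convex sets, hence convex, and therefore either empty or contractible. Thus every $\widetilde\beta_i(\bigcap\mathcal{G};\Z_2)$ vanishes and $TC_\infty(\mathcal{F}) = 0$.

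For the lower bound $c(\mathcal{F}) \ge c$, take $S = \{p_1,\dots,p_c\}$: no $A_i$ contains all of $S$ (because $p_i \notin A_i$), so by the convention in the definition of $\conv_{\mathcal{F}}$ we have $\conv_{\mathcal{F}}(S) = \R^d \ni x$. For any proper subset $S' = \{p_i : i \in I\}$ with $I \subsetneq [c]$, the members of $\mathcal{F}$ containing $S'$ are exactly the $A_j$ with $j \notin I$, so $\conv_{\mathcal{F}}(S') = \bigcap_{j \notin I} A_j$, which misses $x$ since $x \notin A_j$ for every $j \in [c] \setminus I \ne \emptyset$. For the matching upper bound $c(\mathcal{F}) \le c$, let $S$ be arbitrary and $y \in \conv_{\mathcal{F}}(S)$. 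For each index $i$ with $y \notin A_i$ the inclusion $A_i \supseteq S$ would force $y \in A_i$, so we can pick $q_i \in S \setminus A_i$. Then $S' := \{q_i : y \notin A_i\} \subseteq S$ has $|S'| \le |\mathcal{F}| = c$, and by contrapositive any $A_j \supseteq S'$ must contain $y$, so $y \in \conv_{\mathcal{F}}(S')$. Combining both bounds yields $c(\mathcal{F}) = c$.

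The only mild obstacle is the requirement that $c$ points lie in convex position, which is why the statement assumes $d \ge 2$ (for $d = 1$ and $c \ge 3$ any convex set containing the two extreme collinear points also contains the middle ones, breaking the separation step); but the statement excludes exactly that degenerate range.
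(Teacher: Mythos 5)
Your proof is correct and follows essentially the same approach as the paper: a family of $c$ sets with the incidence pattern $A_i \ni p_j$ if and only if $j \neq i$, all of whose intersections are topologically trivial, together with a point $x$ outside $\bigcup_i A_i$ that lies in $\conv_{\mathcal{F}}(S)=\R^d$ but in no $\conv_{\mathcal{F}}(S')$ for proper $S'\subsetneq S$. The only differences are cosmetic: you realize the $A_i$ as convex $\varepsilon$-neighborhoods of $\conv\{p_j : j\neq i\}$ rather than as unions of spines of a star, and you spell out the upper bound $c(\mathcal{F})\leq c$, which the paper leaves implicit.
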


\noindent
\begin{minipage}{0.75\textwidth}
 \begin{proof}
Indeed, consider a star with $c$ spikes $T_1,T_2,\ldots, T_c$
each containing a point $t_i$.
Let $A_i:=\bigcup_{j\neq i}T_j$
and $\F=\{A_1,A_2,\ldots, A_c\}$.

Then any intersection of the sets $A_i$
is contractible ( as the center $s$ of the star is contained in each $A_i$), and hence topologically trivial. Let $S=\{t_1,\ldots,t_c\}$. Observe that $\conv_\F S=\R^d$. Let $x$ be any point in $(\conv_\F S) \setminus \bigcup_{i=1}^c A_i$. Then $x\in\conv_F S$, and $x\notin\conv_F S'$ for any $S'\subsetneq S$. Thus $c(\A) = c$. 
\end{proof}
\end{minipage}
\begin{minipage}{0.25\textwidth}
  \flushright{
  \includegraphics[page=1]{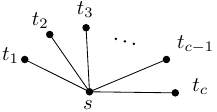}
   }
 \end{minipage}
\newline

\heading{Organization of the paper.}
Our main result, Theorem \ref{t:boundedRadon}, as well as its surface version, Theorem \ref{t:surf}, are proven in Section~\ref{s:technique}. In Section~\ref{s:pq_surface} we show that for families of open sets on a surface (or a plane), the fractional Helly number is linear in $b$ (Theorem~\ref{t:frachelly_surfaces}). Some further open problems are formulated in Section~\ref{s:discussion}. That section also contains a remark about computing with $\Z_2$-coefficients or qualitative sharpness of Theorem \ref{t:boundedRadon} as well as a discussion about convexity spaces.

\section{Technique}\label{s:technique}

We generalize and polish a technique that was originally developed in~\cite{hb17}, for showing an upper bound on the Helly number for families in $\R^d$ with bounded mid-level homological complexity. 
Independently of this strengthening we also separate the combinatorial and topological part of the proof, which we hope will be useful in the future. Prospective benefits of this step are explained in the final discussion in Section~\ref{s:discussion}.

We start with the topological tools (Sections \ref{s:HAE} and \ref{s:CCHM}) including the proof of Theorem~\ref{t:boundedRadon} modulo Proposition~\ref{p:ccm}. We divide the proof 
of the main ingredient (Proposition~\ref{p:ccm}) into two parts: Ramsey-type result (Section \ref{s:ramsey}) and induction (Section~\ref{s:induction}).

\heading{Notation \& convention.}
For an integer $n \geq 1$, let $[n] = \{1,\ldots,n\}$. 
If $P$ is a set, we use the symbol $2^P$ to denote the set of all its subsets and $\binom{P}{n}$ to denote the family of all $n$-element subsets of $P$.
We denote by $\Delta_n$ the standard $n$-dimensional simplex. If $K$ is a simplicial complex, $V(K)$ stands for its set of vertices and $K^{(k)}$ stands for its $k$-dimensional skeleton, i.e. the subcomplex formed by all its faces of dimension up to $k$. 
The \emph{barycentric subdivision} $\sd K$ of an abstract simplicial complex $K$ is the complex formed by all the chains contained in the partially ordered set $(K\setminus\{\emptyset\},\subseteq)$, so called the \emph{order complex} of $(K\setminus\{\emptyset\},\subseteq)$.
Let $v$ be a singleton disjoint from vertices of $K$. The \emph{cone} $v * K$ of $K$ with an appex $v$ is an abstract simplicial complex $\{v \cup \sigma \colon \sigma \in K \} \cup \{\emptyset\}$.
Unless stated otherwise, we only work with abstract simplicial complexes.
All chain groups and chain complexes are considered with $\Z_2$-coefficients.

\subsection{Homological almost embeddings}\label{s:HAE}
Homological almost embeddings are the first ingredient that we need. Before defining them, let us first recall (standard) almost-embeddings. Let $\mathbf{R}$ be a non-empty topological space.
\begin{definition}\label{d:alme}
 Let $K$ be an (abstract) simplicial complex with geometric realization $|K|$.
 A continuous map $f\colon |K|\to \mathbf{R}$ is an \emph{almost-embedding} of $K$ into $\mathbf{R}$, if the images of disjoint simplices are disjoint.
\end{definition}

\begin{definition}[\cite{hb17}]\label{d:homrep}
Let $K$ be a simplicial complex, and consider
a chain map
$\gamma\colon C_\ast(K; \Z_2)\rightarrow
C_\ast(\mathbf{R}; \Z_2)$ from the simplicial chains in $K$ to singular chains in
$\mathbf{R}$.

\begin{enumerate}[label=(\roman*)]
\item The chain map $\gamma$ is called \emph{\nontrivial} if the image of every vertex of $K$ is a finite set of points
  in~$\mathbf{R}$ \textup{(}a 0-chain\textup{)} of \emph{odd} cardinality. 
  \item The chain map $\gamma$ is called a  \emph{homological
    almost-embedding} of $K$ into $\mathbf{R}$ if it is \nontrivial\ and if, additionally, the following holds: whenever $\sigma$ and $\tau$ are disjoint simplices of $K$, their image chains
  $\gamma(\sigma)$ and $\gamma(\tau)$ have disjoint supports, where
  the support of a chain is the union of (the images of) the singular simplices 
  with nonzero coefficient in that chain.
\end{enumerate}
\end{definition}
For an illustration, see Figure \ref{f:almost_embed}.

\begin{remark}
 If we consider augmented chain complexes with chain groups also in dimension $-1$, 
  then being \nontrivial\ is equivalent to requiring that $\gamma_{-1}\colon C_{-1}(K;\Z_2)\to C_{-1}(\mathbf{R};\Z_2)$ is an isomorphism.
\end{remark}

We note that any continuous map $f\colon |K| \to \Rspace$ induces a nontrivial chain map and that if $f$ is an almost-embedding, the induced chain map is a homological almost-embedding. 

\begin{figure}
  \begin{center}
  \includegraphics[page=2]{pictures}
  \caption{An example of a homological almost-embedding of $K_4$ into the plane.}\label{f:almost_embed}
 \end{center}
 \end{figure}

The next ingredient we need is the fact that
the classical result about the non-almost-emebddability of the $\lceil d/2\rceil$-skeleton of $(d+2)$-dimensional simplex into $\R^{d}$ holds also in the homological setting.

\begin{theorem}[Corollary 14 in~\cite{hb17}]\label{c:nohomrep}\label{t:homological_van_Kampen}
  For any $d \ge 0$, the $\lceil d/2\rceil$-skeleton $\skelsim{\lceil d/2\rceil}{d+2}$ of the
  $(d+2)$-dimensional simplex has no homological almost-embedding in
  $\R^{d}$.
\end{theorem}
 We note that the proof combines the standard cohomological proof that $\skelsim{k}{2k+2}$ does not almost-embed into $\R^{2k}$  with the fact that cohomology ``does not distinguish'' between maps and non-trivial chain maps. For details see~\cite{hb17}.\\

\subsection{Constrained chain maps}~\label{s:CCHM}
We strengthen the machinery from~\cite{hb17} in order to capture our more general setting.
To prove Theorem \ref{t:boundedRadon}, we need one more definition (Definition \ref{d:constr_chain_map}). A curious reader may compare our definition of constrained chain map with the definition from \cite[Section 3.2]{hb17}. For convenience of the reader we state the older definition as Definition~\ref{def:constraned_chain_map_old}. Let us remark that the definition introduced in the current paper is more versatile. (Although it might not be obvious on the first sight.) Unlike the previous definition, the current form allows us to prove the bound on the Radon number. We discuss the differences in more detail in Remark~\ref{r:constrained_chain_maps}.\medskip

Let $\mathbf{R}$ be a topological space, let $K$ be a simplicial complex and let $\gamma: C_*(K) \to C_*(\mathbf{R})$ be a chain map from the simplicial chains of $K$ to the singular chains of $\mathbf{R}$.

\begin{definition}[Constrained chain map]\label{d:constr_chain_map}
 Let $\mathcal F$ be a family of sets in $\mathbf{R}$ and $P$ be a set of points in $\mathbf{R}$.
 Let $\gamma: C_*(K) \to C_*(\mathbf{R})$
be the aforementioned chain map. We say that $\gamma$ is \emph{constrained by
  $(\F,\Phi)$} if there exists a map $\Phi$ from $K$ to $2^{P}$ such that

\begin{enumerate}[label=(\roman*)]
 \item $\Phi(\sigma   \cap \tau) = \Phi(\sigma) \cap \Phi(\tau)$ for all $\sigma, \tau \in K$ and
  $\Phi(\emptyset)=\emptyset$. \label{it:(i)}

\item For any simplex $\sigma \in K$, the support of
  $\gamma(\sigma)$ is contained in $\conv_\mathcal F{\Phi(\sigma)}$.\label{it:(ii)}
\end{enumerate}

If there is some $\Phi$ such that the chain map $\gamma$ 
is constrained by $(\F,\Phi)$,
we say that $\gamma$ is \emph{constrained by $(\F, P$)}.
\end{definition}

\begin{remark}\label{r:monotonicity}
 Note that the map $\Phi$ from Definition \ref{d:constr_chain_map} is monotone.
 Indeed, for any simplices $\tau\subseteq\sigma \in K$, the item \ref{it:(i)} implies
   $
    \Phi(\tau) = \Phi(\tau \cap \sigma) = \Phi(\tau) \cap \Phi(\sigma) \subseteq \Phi(\sigma).
   $
\end{remark}

We now relate constrained chain maps and homological almost embeddings.

\begin{lemma}\label{l:hom_almost_embedding}
 Let $\gamma: C_*(K) \to C_*(\mathbf R)$ be a nontrivial chain map constrained by $(\mathcal F, P)$. Then $\gamma$ is a homological almost embedding or there exist two disjoint sets $A, B \subseteq P$ with $\conv_{\mathcal F} A \cap \conv_{\mathcal F}B \neq \emptyset$.
\end{lemma}

\begin{proof}
 Let us assume that for every two disjoint subsets $A,B \subseteq P$ we have $\conv_{\mathcal F}A \cap \conv_{\mathcal F} B = \emptyset$. As $\gamma$ is non-trivial, it is enough to show that images of disjoint faces in $K$ have disjoint supports.
 Let $\sigma$ and $\tau$ be two disjoint simplices of $K$ and 
 $\Phi: K \to 2^P$ be a map witnessing that $\gamma$ is constrained by $(\mathcal F, P)$. By Definition \ref{d:constr_chain_map}\ref{it:(i)}, the sets  
 $\Phi(\sigma)$ and $\Phi(\tau)$
 are disjoint.  Furthermore, the supports of $\gamma(\sigma)$ and $\gamma(\tau)$ are contained, respectively, in 
 $\conv_\mathcal{F}\Phi(\sigma)$ and $\conv_\mathcal{F}\Phi(\tau)$ by Definition \ref{d:constr_chain_map}\ref{it:(ii)}. However, by  assumption 
 $
  \conv_\F \Phi(\sigma) \cap \conv_\F \Phi(\tau)= \emptyset
 $ and the result follows.
\end{proof}

The most important ingredient for the proof of Theorem \ref{t:boundedRadon} is the following proposition:  
\begin{proposition}\label{p:ccm}
  For any finite simplicial complex $K$ and a non-negative integer $b$
  there exists a constant $r_K(b)$ such that the following holds. For
  any  family $\F$ in a topological space~$\mathbf{R}$ with $HC_{\dim K}(\F)\leq b$ and any set $P \subseteq \mathbf{R}$ of at least $r_K(b)$ points
  there exists a nontrivial chain map $\gamma: C_*(K) \to
  C_*(\mathbf{R})$ that is constrained by $(\F,P)$.
  
  Furthermore, if $\dim K\leq 1$, one can even find such $\gamma$ that is induced by some continuous map $f \colon |K| \to \mathbf{R}$ from the geometric realization $|K|$ of $K$ to $\mathbf{R}$.
\end{proposition}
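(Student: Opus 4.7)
The proof proceeds by induction on the skeleta of $K$, constructing $\Phi$ and $\gamma$ simultaneously while repeatedly Ramsey-pruning the point set $P$. Start with $P$ enormous. Assign each vertex $v \in V(K)$ a distinct point $p_v \in P$, set $\Phi(v) = \{p_v\}$ and $\gamma(v) = p_v$, and extend $\Phi$ to an arbitrary simplex by $\Phi(\sigma) = \{p_v \colon v \in \sigma\}$. Because distinct vertices are sent to disjoint singletons, condition~(i) of Definition~\ref{d:constr_chain_map} is automatic, and the resulting $0$-skeleton chain map is \nontrivial.

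Now suppose $\gamma$ has been defined on $K^{(i-1)}$ for some $1 \leq i \leq \dim K$, constrained by $(\F, \Phi)$. For each $i$-simplex $\sigma \in K$, the chain $c_\sigma := \gamma(\partial \sigma)$ is a cycle supported inside $\conv_\F \Phi(\sigma)$, since every face $\tau \subsetneq \sigma$ already satisfies $\operatorname{supp}\gamma(\tau) \subseteq \conv_\F \Phi(\tau) \subseteq \conv_\F \Phi(\sigma)$. To extend $\gamma$ to $\sigma$, we need a chain in $C_i(\conv_\F \Phi(\sigma))$ whose boundary equals $c_\sigma$, i.e., $c_\sigma$ must be null-homologous in $\conv_\F \Phi(\sigma)$. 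The hypothesis $TC_{\dim K}(\F) \leq b$ only supplies $\widetilde\beta_{i-1}(\conv_\F \Phi(\sigma)) \leq b$, so $c_\sigma$ may well be homologically nontrivial. This is exactly where the Ramsey-type result of Section~\ref{s:ramsey} intervenes: by colouring tuples of points of $P$ with a finite palette of ``homological types'' that capture the behaviour of cycles arising in the intersections $\conv_\F Q$, one extracts a large monochromatic sub-reservoir in which the ambient uniformity forces $c_\sigma$ to lie in the trivial class, so the filling exists and $\gamma(\sigma)$ can be chosen. The main obstacle lies in designing this colouring: it must be fine enough to control every cycle that the inductive construction can produce, yet coarse enough (with a finite palette) for Ramsey's theorem to return a usable monochromatic set and a quantitative bound $r_K(b)$ in terms of $|V(K)|$ and $b$.

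For the addendum when $\dim K \leq 1$, one needs $\gamma$ to come from an actual continuous $f \colon |K| \to \mathbf{R}$. Vertices are handled as before; the only remaining task is to realise each edge $\{u,v\} \in K$ as a continuous arc inside $\conv_\F \{p_u, p_v\}$ joining $p_u$ and $p_v$. Such an arc exists iff $p_u$ and $p_v$ share a path-component of $\conv_\F \{p_u, p_v\}$, and $TC_1(\F) \leq b$ forces at most $b+1$ path-components in each pairwise $\F$-convex hull. A Ramsey-colouring of unordered pairs from $P$ by ``index of the shared path-component'' therefore produces, for sufficiently large $P$, a monochromatic subset in which every pair is joinable by an arc; gluing these arcs assembles the desired $f$.
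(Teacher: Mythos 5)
Your proposal correctly identifies the overall architecture (induction on dimension, Ramsey pruning of $P$) and, crucially, also identifies the real obstruction: the cycle $c_\sigma = \gamma(\partial\sigma)$ may fail to be null-homologous in $\conv_\F\Phi(\sigma)$, since the hypothesis only bounds $\widetilde\beta_{i-1}$ by $b$, not by zero. However, the step that is supposed to resolve this obstruction is asserted rather than argued. You write that after passing to a monochromatic configuration, ``the ambient uniformity forces $c_\sigma$ to lie in the trivial class.'' This is false as stated: Ramsey monochromaticity under a colouring by homology classes only makes the classes \emph{agree}, and the common class can perfectly well be non-trivial. Your proposal even flags that ``the main obstacle lies in designing this colouring,'' which is an honest acknowledgement that the key idea has not been supplied. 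The same issue reappears in your treatment of $\dim K \leq 1$: a 2-colouring of pairs by ``same/different path-component of $\conv_\F\{p_u,p_v\}$'' gives no reason for the monochromatic colour to be the favourable one.

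The paper fills this gap with a barycentric subdivision trick combined with an even-count cancellation, and this requires restructuring the induction. Rather than extending a chain map from $K^{(k-1)}$ to $K$ directly, the map $\gamma$ on $K^{(k-1)}$ is built as a composition $C_*(K^{(k-1)}) \xrightarrow{\alpha} C_*((\sd K)^{(k-1)}) \xrightarrow{\beta} C_*(\skelsim{k-1}{s}) \xrightarrow{\gamma'} C_*(\mathbf{R})$, where $\alpha$ is the subdivision chain map, $\gamma'$ comes from the induction hypothesis applied to the $(k-1)$-skeleton of a large simplex $\Delta_s$ (not to $K^{(k-1)}$), and $\beta$ is the inclusion supplied by the tailored Ramsey result (Proposition~\ref{p:ramsey_selection}, which is stronger than plain Ramsey: it must also produce disjoint witness simplices $M_\mu$, which get absorbed into the definition of $\Phi(\mu) = \Psi(M_\mu \cup \varphi(\mu))$, so your simple $\Phi(\sigma) = \{p_v : v\in\sigma\}$ is not the right choice). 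The punchline: for each $k$-simplex $\sigma$ of $K$, the Ramsey selection ensures that all $k$-simplices $\zeta$ of $\sd\sigma$ are sent to cycles $\gamma'\beta(\partial\zeta)$ with \emph{the same} $\Z_2$-homology class inside $\conv_\F\Phi(\sigma)$. Since $\alpha(\partial\sigma) = \sum_\zeta \partial\zeta$ runs over $(k+1)!$ subdivided simplices, which is even for $k\geq 1$, the sum of their classes is an even multiple of the common class and hence zero mod $2$. That even-count cancellation, not uniformity, is what makes the filling exist. For $\dim K = 1$ the same mechanism shows $\gamma(\partial\tau)$ is a trivial $0$-cycle in $\conv_\F\Phi(\tau)$, so the two endpoints land in a common path-component and can be joined by an actual arc, which is what upgrades the chain map to a genuine continuous map.
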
  

Before proving Theorems \ref{t:boundedRadon} and \ref{t:surf}, let us relate Proposition~\ref{p:ccm} to the Radon number.

\begin{proposition}\label{p:gen}
 Let $\mathbf{R}$ be a topological space and $K$ a finite simplicial complex that does not homologically almost-embed into $\mathbf{R}$.
 Then for each integer $b\geq 0$ and every family
 $\F$ of sets in $\mathbf{R}$ satisfying $HC_{\dim K}(\F)\leq b$, the Radon number $r(\F)$ is at most $r_K(b)$, where $r_K(b)$ is the constant from Proposition~\ref{p:ccm}.
 
 Moreover, if $\dim K\leq 1$, it suffices to assume that $K$ does not almost-embed into $\mathbf{R}$.
\end{proposition}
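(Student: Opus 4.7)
The plan is to show directly that every set $P \subseteq \mathbf{R}$ of size $r_K(b)$ admits a Radon partition with respect to $\F$. First I will apply Proposition~\ref{p:ccm} to produce a nontrivial chain map $\gamma\colon C_*(K) \to C_*(\mathbf{R})$ constrained by $(\F, P)$; this is where the hypotheses $TC_{\dim K}(\F)\leq b$ and $|P|\geq r_K(b)$ enter.

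Next I argue by contradiction. Suppose no Radon partition of $P$ exists, i.e.\ $\conv_\F(P_1) \cap \conv_\F(P_2) = \emptyset$ for every decomposition $P = P_1 \sqcup P_2$. The key step is to upgrade this to the disjointness hypothesis of Lemma~\ref{l:constToHOE}: for arbitrary disjoint $S, T \subseteq P$ (not necessarily covering $P$), take the partition $P_1 = S$, $P_2 = P \setminus S$. Since $T \subseteq P_2$ we have $\conv_\F(T) \subseteq \conv_\F(P_2)$, and hence $\conv_\F(S) \cap \conv_\F(T) = \emptyset$. Lemma~\ref{l:constToHOE} then promotes $\gamma$ to a homological almost-embedding of $K$ into $\mathbf{R}$, contradicting the assumption on $K$.

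For the refinement with $\dim K \leq 1$, I will invoke the second half of Proposition~\ref{p:ccm} to pick $\gamma$ that is induced by a continuous map $f \colon |K| \to \mathbf{R}$. For such $\gamma$ the support of $\gamma(\sigma)$ is exactly $f(|\sigma|)$ for every simplex $\sigma$, so ``$\gamma$ is a homological almost-embedding'' is equivalent to ``$f$ is an almost-embedding'' in the sense of Definition~\ref{d:alme}. The same argument therefore goes through under the weaker hypothesis that $K$ does not almost embed into $\mathbf{R}$.

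The statement is essentially a bookkeeping glue between Proposition~\ref{p:ccm} (existence of constrained chain maps) and Lemma~\ref{l:constToHOE} (constrained nontrivial chain maps become almost-embeddings once disjointness of the hulls is verified), so I do not anticipate a serious obstacle. The only subtle point is the passage from ``no Radon partition'' to ``disjoint hulls for every pair of disjoint subsets of $P$'', and this is precisely the $T \subseteq P \setminus S$ trick described above.
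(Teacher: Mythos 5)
Your proposal is correct and follows essentially the same route as the paper: apply Proposition~\ref{p:ccm} to a set $P$ of $r_K(b)$ points witnessing $r(\F)>r_K(b)$, upgrade ``no Radon partition'' to disjointness of $\conv_\F(S)$ and $\conv_\F(T)$ for all disjoint $S,T\subseteq P$ via monotonicity of $\conv_\F$, and conclude with Lemma~\ref{l:constToHOE}, handling $\dim K\leq 1$ through the continuous-map refinement of Proposition~\ref{p:ccm}. The only difference is that you spell out the monotonicity step ($T\subseteq P\setminus S$) which the paper leaves implicit.
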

\begin{proof}[{Proof of Proposition \ref{p:gen} assuming Proposition \ref{p:ccm}}]
 If $r(\F) > r_K(b)$, then there is a set $P$ of $r_K(b)$ points such that for any two disjoint subsets $P_1,P_2\subseteq P$ we have $\conv_{\F}(P_1)\cap\conv_{\F}(P_2)=\emptyset$.
 Let $\gamma\colon C_*(K)\to C_*(\mathbf{R})$ be a nontrivial chain map constrained by $(\F,P)$ given by Proposition~\ref{p:ccm}.
 By Lemma \ref{l:hom_almost_embedding}, $\gamma$ is a homological almost-embedding of $K$, which is a contradiction.
 
 If $\dim K\leq 1$, one can take $\gamma$ to be induced by a continuous map $f \colon |K| \to \mathbf{R}$. However, one can easily check that in that case $\gamma$ is a homological almost-embedding if and only if $f$ is an almost-embedding.  
\end{proof}

Theorems~\ref{t:boundedRadon} and ~\ref{t:surf} are now immediate consequences of Proposition \ref{p:gen}.
 \begin{proof}[Proof of Theorem \ref{t:boundedRadon}]
 By Theorem~\ref{c:nohomrep}, $\skelsim{\lceil d/2\rceil}{d+2}$ does not homologically almost-embed into $\R^d$, so Proposition~\ref{p:gen} applies and yields Theorem~\ref{t:boundedRadon}.
 \end{proof}

 \begin{proof}[Proof of Theorem~\ref{t:surf}]
  By results in~\cite{kuhnel_our}, for each surface $S$ there is a finite graph $G$ that does not almost-embed into $S$, so Proposition~\ref{p:gen} applies. 
 \end{proof}

 We note that compared to \cite{kuhnel_our}, recent works by Pat\'ak, Tancer \cite{Patak_Tancer}, and Fulek, Kyn\v cl \cite{rk:genus} provide much smaller graphs that are not almost-embeddable into~$S$.
 
 \begin{remark}\label{r:constrained_chain_maps}
  Let us compare a constrained chain map defined in this paper (Definition~\ref{d:constr_chain_map}) with the notion of constrained chain map defined in \cite[Section 3.2]{hb17}. We recall the definition from \cite{hb17}:
  
 \begin{definition}\label{def:constraned_chain_map_old}
 Let $\mathcal F = \{U_1, \ldots, U_n\}$ be a family of sets in a topological space $\mathbf R$, let $K$ be a simplicial complex and let $\gamma: C_*(K) \to C_*(\mathbf{R})$ be a chain map from the simplicial chains of $K$ to the singular chains of $\mathbf{R}$.
  We say that $\gamma$ is \emph{constrained by $\mathcal F$} if there exists a map $\Phi'$ from $K$ to $2^{[n]}$ such that
 
 \begin{enumerate}[label=(\roman*)]
  \item  $\Phi'(\sigma   \cap \tau) = \Phi'(\sigma) \cap \Phi'(\tau)$ for all $\sigma, \tau \in K$ and
   $\Phi'(\emptyset)=\emptyset$.
 
 \item\label{i:ii} For any simplex $\sigma \in K$, the support of
   $\gamma(\sigma)$ is contained in $\bigcap_{i \in [n]\setminus \Phi'(\sigma)}U_i$.
 \end{enumerate}%
  \end{definition}

  There are two obvious differences between Definitions \ref{d:constr_chain_map} and \ref{def:constraned_chain_map_old}. First, the family $\F$ in Definition \ref{def:constraned_chain_map_old} is finite, and second, there is no set $P$ (we work with $[n]$ instead).  Moreover, in Definition \ref{def:constraned_chain_map_old}\ref{i:ii}
  the support of $\gamma(\sigma)$ is required to be contained in $\bigcap_{i \in [n]\setminus \Phi'(\sigma)} U_i$ instead of in $\conv_\mathcal F \Phi(\sigma)$.

  \begin{figure}
   \begin{center}
   \includegraphics[page=3]{pictures}
   \caption{Let $K$ be a simplicial complex formed by a single edge with vertices $a,b$ and $\gamma$ be a chain map induced by a geometric realization of $K$ in $\mathbb R^2$. The family $\mathcal F = \{U_1, U_2\}$ of subsets of $\mathbb R^2 $ is indicated on the picture as well as the map $\Phi'$. It is easy to verify that $\gamma$ is constrained by $\mathcal F$ according to Definition \ref{def:constraned_chain_map_old}.
      We denote by $p_1, p_2$ the images of the vertices of $K$ in the geometric realization, respectively, and put $P = \{p_1, p_2\}$. Again, it is easy to verify that $\gamma$ is constrained by $(\mathcal F, P)$ according to Definition \ref{d:constr_chain_map}, with $\Phi$ defined at the picture.
      Finally, $U_1 \cap U_2 =\conv_\mathcal F \Phi(a) \subsetneq \bigcap_{i \in [2]\setminus \Phi'(a)} = U_2$.}\label{fig:constr_chain_map}
     \end{center}
  \end{figure}

   We now describe a situation when $\bigcap_{i \in [n]\setminus \Phi'(\sigma)} U_i = \conv_\mathcal F \Phi(\sigma)$.
  Let $\F = \{U_1, \ldots, U_n\}$ be a family of sets such that $\bigcap \mathcal F = \emptyset$ and $\bigcap (\mathcal F \setminus \{U\}) \neq \emptyset$ for every   $U \in \mathcal F$. We set $P := \{ p_i \colon p_i \in  \bigcap_{j \neq i} U_j\}$ and let $\Phi: K \to 2^P$ be some map. Let us refer to this setting as (S)\label{s:setting}. Note that $|P| = n$ (otherwise $\bigcap \mathcal F \neq \emptyset$) and there is a bijection $\pi: P \to [n]$ given by $p_i \mapsto i$. The bijection $\pi$ naturally extends to the bijection $\pi': 2^P \to 2^{[n]}$ by $P_I \mapsto  I$, where $P_I := \{p_i \colon i \in I\}$.
  Note that $P_{I \cap J} = P_I \cap P_J$. We set $\Phi' = \pi' \circ \Phi$ and check that $\Phi'$ satisfies the condition~\ref{it:(i)} in Definition \ref{def:constraned_chain_map_old}, if and only if $\Phi$ satisfies the same condition. Regarding condition \ref{it:(ii)}, let $\sigma \in K$ and $\Phi(\sigma) = P_I$ for some $I \subseteq [n]$. Since $P_I \subseteq \bigcap_{i \in [n]\setminus I} U_i$ and $P_I \not\subseteq U_i$ for any $i \in I$, we get 
  \[\conv_\mathcal F \Phi(\sigma) = \conv_\mathcal F P_I =  \bigcap_{i \in [n]} \{U_i: P_I \subseteq U_i\}= \bigcap_{i \in [n]\setminus I} U_i = \bigcap_{i \in [n]\setminus \Phi'(\sigma)} U_i.\]
  We conclude that in the setting (S), a chain map $\gamma$ is constrained by $\mathcal F$ in the sense of Definition \ref{def:constraned_chain_map_old} if and only if it is constrained by $(\mathcal F, P)$ in the sense of Definition \ref{d:constr_chain_map}, with $P$ defined as above. We denote this equivalence by (E).
  Furthermore, it follows that in the setting (S) Proposition \ref{p:ccm} implies \cite[Proposition 30]{hb17}. We note that assuming (S) is not a serious restriction as this is exactly the setting in which Proposition 30 is further used in \cite{hb17} to prove its main result stating that bounded homological complexity implies a bounded Helly number (\cite[Theorem 1]{hb17}). In fact, Proposition \ref{p:ccm} together with Lemma~\ref{l:hom_almost_embedding} and the equivalence (E) stated above directly yield a proof of \cite[Theorem 1]{hb17}.
  
  On the other hand, if we are not in the setting (S), it can happen that 
  $\conv_\mathcal F \Phi(\sigma) \subsetneq \bigcap_{i \in [n]\setminus \Phi'(\sigma)} U_i$. Indeed, see Figure \ref{fig:constr_chain_map} describing an example of a chain map $\gamma$, a family $\mathcal F$, a point set $P$ and two maps $\Phi, \Phi'$ such that $\gamma$ is constrained by $\mathcal F$ according to Definition \ref{def:constraned_chain_map_old}, $\gamma$ is constrained by $(\mathcal F, P)$ according to Definition \ref{d:constr_chain_map} and 
  $\conv_\mathcal F \Phi(\sigma) \subsetneq \bigcap_{i \in [n]\setminus \Phi'(\sigma)} U_i$. Note that in this example $\bigcap \mathcal F \neq \emptyset$. 
  
  The absence of the point set $P$ in Definition \ref{def:constraned_chain_map_old} as well as no reasonable way how to conclude anything about $\conv_\mathcal F$ indicate why a constrained chain map defined via Definition \ref{def:constraned_chain_map_old} was insufficient for us to provide an upper bound on the Radon number.
  
 From now on, whenever we talk about constrained chain maps we refer to Definition~\ref{d:constr_chain_map}.
\end{remark}

\subsection{Combinatorial part of the proof}\label{s:ramsey}
The classical Ramsey theorem~\cite{ramsey30} states
that for all positive integers $k,n$ and $c$
there is a number $R_k(n;c)$ such that the following holds.
For each set $X$ satisfying $|X|\geq R_k(n;c)$ and each coloring $\rho\colon \binom{X}{k}\to [c]$,
there is a \emph{monochromatic} subset $Y\subseteq X$ of size $n$, where a subset $Y$ is monochromatic, if 
all $k$-tuples in $Y$ have the same color.
 We note that a coloring is just another name for a map. However, it is easier to say ``the color of $z$'', instead of ``the image of $z$ under $\rho$''.
Observe that the case $k=1$ corresponds to the pigeon hole principle and $R_1(n;c) = n(c-1)+1$.

We fix a set $X$ and positive integers $k, c$. In what follows we need to consider several colorings of $k$-element subsets of $X$. So suppose that for every $V\subseteq X$ we are given a coloring $\rho_V \colon \binom{V}{k}\to [c]$ of the $k$-element subsets of $V$. If $|V|<k$,  the coloring $\rho_V$ is, by definition, the empty map. See Figure \ref{fig:colorings} for an example.

\begin{figure}
\begin{center}
 I. \qquad $\begin{array}{c|*{16}{c}}
& \rho_\emptyset & \rho_a & \rho_b & \rho_c & \rho_d & \rho_{ab} & \rho_{ac} & \rho_{ad} & \rho_{bc} & \rho_{bd} & \rho_{cd} & \rho_{abc} & \rho_{abd} & \rho_{acd} & \rho_{bcd} & \rho_{abcd}\\ \hline
a & - & 2 & - & - & - & 1 & 2 & 1 & - & - & - & 2 & 2 & 2 & - & 1\\ 
b & - & - & 1 & - & - & 1 & - & - & 1 & 1 & - & 2 & 2 & - & 2 & 2\\ 
c & - & - & - & 3 & - & - & 2 & - & 2 & - & 1 & 1 & - & 1 & 2 & 1\\ 
d & - & - & - & - & 2 & - & - & 1 & - & 1 & 2 & - & 3 & 2 & 2 & 3\\  
 \end{array}
 $
 \vspace{1em}

  II. \quad $\begin{array}{c|*{16}{c}}
& \rho_\emptyset & \rho_a & \rho_b & \rho_c & \rho_d & \rho_{ab} & \rho_{ac} & \rho_{ad} & \rho_{bc} & \rho_{bd} & \rho_{cd} & \rho_{abc} & \rho_{abd} & \rho_{acd} & \rho_{bcd} & \rho_{abcd}\\ \hline
ab & - & - & - & - & - & 1 & - & - & - & - & - & 2 & 1 & - & - & 1\\ 
ac & - & - & - & - & - & - & 2 & - & - & - & - & 2 & - & 3 & - & 2\\ 
ad & - & - & - & - & - & - & - & 1 & - & - & - & - & 1 & 3 & - & 1\\ 
bc & - & - & - & - & - & - & - & - & 3 & - & - & 2 & - & - & 2 & 3\\  
bd & - & - & - & - & - & - & - & - & - & 1 & - & - & 1 & - & 2 & 3\\  
cd & - & - & - & - & - & - & - & - & - & - & 2 & - & - & 3 & 2 & 2\\  
 \end{array}
 $

 \end{center}
 \caption{Two examples of colorings $\rho_V$, $V \subseteq X$, for $X=\{a,b,c,d\}$, $c=3$ and $k=1$ or $k=2$, respectively.
 For the sake of readibilty we write $\rho_\cdot$ instead of $\rho_{\{\cdot\}}$ and we also ommit commas separating distinct elements of $X$.}\label{fig:colorings}
\end{figure}

We now prove that the situation with several colorings admits a Ramsey type result. 
For that we need to adapt the notion of ``monochromatic subset'' to our setting.
Let $Y \subseteq X$ and $m$ be a positive integer. Let $M: \binom{Y}{m} \to 2^{X\setminus Y}$ be a map which maps $m$-element subsets of $Y$ to subsets of $X\setminus Y$. To simplify a notation, we put $\widehat Z := M(Z)$.
A map $M \colon \binom{Y}{m} \to 2^{X\setminus Y}$ is called \emph{$k$-monochromatic}, if each $m$-element subset $Z$ of $Y$ is monochromatic with respect to the coloring $\rho_{Z \cup \widehat Z} : \binom{Z \cup \widehat Z}{k} \to [c]$ (that is, all $k$-element subsets of $Z$ get the same color in the coloring $\rho_{Z \cup \widehat Z}$). 

For example, assume that $X=\{a,b,c,d\}$, $Y= \{a,b,c\}$, $m=2$, and the colorings $\rho_V$, $V \subseteq X$, are defined as in Figure \ref{fig:colorings}(I). Then the trivial map $M_1$ defined as $M_1(Z) = \emptyset$ for any $Z \in \binom{Y}{2}$, is not 1-monochromatic as the set $\{b,c\}$ is not monochromatic with respect to $\rho_{bc}$, which is illustrated on Figure \ref{fig:monochromatic_example}. However, there is a 1-monochromatic map $M_2$ from $\binom{Y}{2}$ to $2^{\{d\}}$, see Figure \ref{fig:monochromatic_example} for details.

To illustrate the concept of 2-monochromatic coloring let $X=Y = \{a,b,c,d\}, m=3$ and let the colorings $\rho_V$, $V \subseteq X$, be defined as in Figure \ref{fig:colorings}(II). Then the map $M_3: \binom{Y}{2} \to 2^{\{c\}}$ defined at Figure \ref{fig:monochromatic_example} is not 2-monochromatic whereas the trivial map $M_4$ is. We refer to Figure \ref{fig:monochromatic_example} for details.

\begin{figure}
\begin{center}
$
\begin{array}{c|c|c|c}
\text{  }&Z & \widehat Z & Z \text{ monochromomatic with respect to } \rho_{Z \cup \widehat Z}\\[2pt]
\hline\hline\\[-8pt]
&\{a,b\} & \emptyset & \text{ YES: }\quad \rho_{ab}(a) = \rho_{ab}(b)\\[2pt]
M_1&\{b,c\} & \emptyset & \text{ NO: }\quad \rho_{bc}(b) \neq \rho_{bc}(c)\\[2pt]
&\{a,c\} & \emptyset & \text{ YES: }\quad \rho_{ac}(a) = \rho_{ac}(c)\\ [2pt]\cline{2-4}\hline\\[-8pt]
&\{a,b\} & \emptyset & \text{ YES: }\qquad \rho_{ab}(a) = \rho_{ab}(b)\\[2pt]
M_2&\{b,c\} & \{d\} & \text{ YES: }\ \rho_{bcd}(b) = \rho_{bcd}(c) = \rho_{bcd}(d)\\[2pt]
&\{a,c\} & \emptyset & \text{ YES: }\qquad \rho_{ac}(a) = \rho_{ac}(c)\\[2pt]
\hline\hline\\[-8pt]
&\{a,b,c\} & \emptyset & \text{ YES: }\quad \rho_{abc}(ab) = \rho_{abcd}(ac)=\rho_{abc}(bc)\\[2pt]%
M_3&\{a,b,d\} & \{c\} & \text{ NO: }\qquad\qquad \rho_{abcd}(ab) \neq \rho_{abcd}(ac)\\[2pt]%
&\{a,c,d\} & \emptyset & \text{ YES: }\quad \rho_{acd}(ac) = \rho_{acd}(ad)=\rho_{acd}(cd)\\[2pt]%
&\{b,c,d\} & \emptyset & \text{ YES: }\quad \rho_{bcd}(bc) = \rho_{bcd}(bd)=\rho_{bcd}(cd)\\[2pt]
\hline\\[-8pt]
&\{a,b,c\} & \emptyset & \text{ YES: }\quad \rho_{abc}(ab) = \rho_{abc}(ac)=\rho_{abc}(bc)\\[2pt]%
M_4&\{a,b,d\} & \emptyset & \text{ YES: }\quad \rho_{abd}(ab) = \rho_{abd}(ad)=\rho_{abd}(bd)\\[2pt]%
&\{a,c,d\} & \emptyset & \text{ YES: }\quad \rho_{acd}(ac) = \rho_{acd}(ad)=\rho_{acd}(cd)\\[2pt]%
&\{b,c,d\} & \emptyset & \text{ YES: }\quad \rho_{bcd}(bc) = \rho_{bcd}(bd)=\rho_{bcd}(cd)\\%
\end{array}
$
\end{center}

\caption{An example of four maps $M_i: Z \mapsto \widehat Z$, where $M_2$ is 1-monochromatic while $M_1$ is not, and $M_4$ is 2-monochromatic while $M_3$ is not. We use the colorings $\rho_V$ defined in Figure \ref{fig:colorings}, I and II, respectively.}\label{fig:monochromatic_example}
\end{figure}

We say that a map $M: \binom{Y}{m} \to 2^{X\setminus Y}$ is \emph{strongly injective} if $M(Z) \cap M(Z') = \emptyset$ whenever $Z \neq Z'$.  Note that there always exists a strongly injective map as we can set $M(Z) = \emptyset$ for every $Z \in \binom{Y}{m}$. 

Here is the promised Ramsey type theorem, which enables us to perform the induction step in the proof of Proposition~\ref{p:ccm}.

\begin{proposition}\label{p:ramsey_selection}
 For any positive integers $k$, $m$, $n$, $c$ there is a constant $N_k=N_k(n;m;c)$ such that the following holds. Let $X$ be a set and for every $V
\subseteq X$ let $\rho_V \colon \binom{V}{k}\to [c]$ be a coloring of the $k$-element subsets of
$V$. If $|X| \geq N_k$, then there always exist an $n$-element subset $Y \subseteq X$ and a strongly injective $k$-monochromatic map $M: \binom{Y}{m} \to 2^{X\setminus Y}$.
\end{proposition}

\begin{remark}
The need of coloring each $k$-tuple by several different colorings $\rho_V$ reflects the fact that we are going to color a cycle $z$ by the singular homology of $\gamma(z)$ inside $\conv_\F\Phi(V)$ for various different sets $V$. It may thus easily happen that if $z, z'$ are two cycles with vertices in $V \cap W$, they have the same colors in $V$ but their colors in $W$ are different. 
\end{remark}

\begin{proof}
 Let $r=R_k(m;c)$. We claim that it is enough to take \[N_k=R_r\left(n+\binom{n}{m}\cdot(r-m);\binom{r}{m}\right).\]
 Suppose that $|X|\geq N_k$ and choose an arbitrary order of the elements of $X$.

 By the choice of $r$, if $V\in\binom{X}{r}$, then there is a subset $A\subseteq V$ of size $m$ such that $\rho_V$ assigns the same color to all $k$-tuples in $A$.
 Let us introduce another coloring $\eta$ that colors each $V\in\binom{X}{r}$ by the relative position  of the first monochromatic $m$-tuple $A$ inside $V$ (with respect to the lexicographic ordering).  
 In total, there are $\binom{r}{m}$ colors. 
 
For illustration: If $V=\{a,c,d,e,f,g,h,i,\ell\}$ and $A=\{a,c,f,h\}$ we assign $V$ the ``color'' $\{1,2,5,7\}$, since the elements of $A$ are on first, second, 5th and 7th position of $V$.

Since $|X|\geq N_k$, there is a subset $U \subseteq X$ of size $n+\binom{n}{m}\cdot\left(r-m\right)$, such that all $r$-tuples in $U$ have the same color in $\eta$, say color $\Omega$.
 Consider the set $Y'=\{1,2,\ldots, n\}$.
  Let $T_1,\ldots, T_{\binom{n}{m}}$ be all the $m$-tuples in $Y'$ in some order.
 Using induction on $l=1,2,\ldots, \binom{n}{m}$, we construct disjoint sets $N(T_l)\subseteq \Q\setminus Y'$, each of size $r-m$, 
 such that each $T_l$ has position $\Omega$ within the set $T_l\cup N(T_l)$.
 In the $l$th step, we define $B_l:=Y'\cup\bigcup_{l'<l} N(T_{l'})$.
 Since rational numbers are dense, there is some set $S\subseteq \Q\setminus B_l$
 such that $T_l$ has position $\Omega$ within $T_l\cup S$. So we let $N(T_l)$ to be the set $S$ and continue in the induction, until we exhaust all the $m$-tuples from $Y'$.
 An example illustrating this concept is presented at Figure \ref{fig:ramsey_rational_numbers}.
 
 \begin{figure}
  \begin{center}
  $
  \begin{array}{c|| >{\columncolor{black!7}}c>{\columncolor{black!7}}
  ccc>{\columncolor{black!7}}cc>{\columncolor{black!7}}cccc}
  
    T_l         & \mathbf1 & \mathbf2 & 3    & 4    & \mathbf5    & 6    & \mathbf7    & 8    & 9   \\ \hline\hline
    \ % <<<< here
\\[-2.3ex]
 \{1,2,3,4\} & \mathbf1 & \mathbf2 & 2.1 & 2.2 & \mathbf3 & 3.1 & \mathbf4 & 5.1 & 5.2\\
 \{1,2,3,5\} & \mathbf1 & \mathbf2 & 2.3 & 2.4 & \mathbf3 & 4.1 & \mathbf5 & 5.3 & 5.4\\
 \{1,2,4,5\} & \mathbf1 & \mathbf2 & 3.2 & 3.3 & \mathbf4 & 4.2 & \mathbf5 & 5.5 & 5.6\\
 \{1,3,4,5\} & \mathbf1 & \mathbf3 & 3.4 & 3.5 & \mathbf4 & 4.3 & \mathbf5 & 5.7 & 5.8\\
 \{2,3,4,5\} & \mathbf2 & \mathbf3 & 3.6 & 3.7 & \mathbf4 & 4.4 & \mathbf5 & 5.9 & 6.0\\
 \end{array}
 $
 \end{center}
  \caption{An example illustrating an assignment of rational numbers in the proof of Proposition \ref{p:ramsey_selection}. Here $Y' = \{1,2,3,4,5\}$, $n=5, m=4$, $r=9$ and $\Omega = \{1,2,5,7\}$. We highlight the position $\Omega$ and distribute the rational numbers in such a way that $T_l$ is on position $\Omega$ within the set $T_l \cup N(T_l)$.}
  \label{fig:ramsey_rational_numbers}
 \end{figure}
 
 \begin{figure}
     {\footnotesize $
  \begin{array}{c||c*{36}c}
  \text{elements of } U  \text{ (part)}& \cellcolor{black!7}\mathbf{a} & \cellcolor{black!7}\mathbf{c} & d & e & f & g & \cellcolor{black!7}\mathbf{h} & i & \ell & o & p & q & r & s & \cellcolor{black!7}\mathbf{t} \\ \hline &% <<<< here
\\[-2.3ex]
\text{elements of } T_l \cup \bigcup N(T_l) & \cellcolor{black!7}\mathbf{1} & \cellcolor{black!7}\mathbf{2} & 2.1 & 2.2 & 2.3 & 2.4 & \cellcolor{black!7}\mathbf{3} & 3.1 & 3.2 & 3.3 & 3.4 & 3.5 & 3.6 & 3.7 & \cellcolor{black!7}\mathbf{4} \\ \\[-0.7em]
  \text{elements of } U \text{ (continuation)}   & A & C & D & E & \cellcolor{black!7}\mathbf{F} & G & H & I & L & O & P & Q & R & S & T \\\hline% <<<< here
\\[-2.3ex]
 \text{elements of } T_l \cup \bigcup N(T_l) & 4.1 & 4.2 & 4.3 & 4.4 & \cellcolor{black!7}\mathbf{5} & 5.1 & 5.2 & 5.3 & 5.4 & 5.5 & 5.6 & 5.7 & 5.8 & 5.9 & 6.0
  \end{array}
 $ }
  
  \caption{An example showing the unique order-preserving isomorphism between $U$ of size 30 and rational numbers given in Figure \ref{fig:ramsey_rational_numbers}. \label{fig:bijection} Here $U = \{a,c,d,e,f,g,h,i,j,k,\ell,o,p,q,r,s,t, A,C,D,E,F,G,H,I,L,O,P,Q,R,S,T\}$ is an ordered set where all small letters are ordered first (lexicographically), followed by lexicographically ordered capital letters. The isomorphism also determines the set $Y$ as it is the image of the set $Y'=\{1,2,3,4,5\}$, that is $\{a,c,h,t,F\}$.}\label{fig:iso}
 \end{figure}

\begin{figure}
\begin{center}
$
 \begin{array}{ccc}
  M: \qquad \binom{Y}{m} & \to & 2^{U\setminus Y} \\[2pt]
  \qquad \qquad \{a,c,h,t\} & \mapsto & \{d,e,i,G,H\}\\[0.5pt]
  \qquad \qquad \{a,c,h,F\} & \mapsto & \{f,g,A,I,L\}\\[0.5pt]
  \qquad \qquad \{a,c,t,F\} & \mapsto & \{\ell,o,C,O,P\}\\[0.5pt]
  \qquad \qquad \{a,h,t,F\} & \mapsto & \{p,q,D,Q,R\}\\[0.5pt]
  \qquad \qquad \{c,h,t,F\} & \mapsto & \{r,s,E,S,T\}
 \end{array}
$
\end{center}
\caption{An example illustrating the construction of the map $M$ in the proof of Proposition \ref{p:ramsey_selection}. Here $m=4$ and both  $Y=\{a,c,h,t,F\}$ and $U$ are same as in Figure \ref{fig:bijection}. }\label{fig:M}
\end{figure}

 Let $\iota$ be the unique order-preserving isomorphism from $Y'\cup\bigcup_{l} N(T_l)$
 to $U$. 
 We consider the natural extension of $\iota$ to all subsets $X$ of $Y'\cup\bigcup_{l} N(T_l)$ given by  $\iota(X):=\{\iota(x)\colon x\in X\}$.
 We then set $Y=\iota(Y')$ and define the required map $M$ via $M(\iota(T_l))=\iota(N(T_l))$; see Figures \ref{fig:iso} and \ref{fig:M} describing our running example.
 By construction, the map $M: \binom{Y}{m} \to 2^{X \setminus Y}$ is strongly injective as $N(T_l)$ are pairwise disjoint.  
 It also follows from the construction that  each set $S\in \binom{Y}{m}$ is on position $\Omega$ within $S\cup M(S)$. Consequently, each $S$ is monochromatic with respect to the coloring $\rho_{S\cup M(S)}: \binom{S \cup M(S)}{k} \to [c]$ by our choice of the set $U$, that is, for each $S \in \binom{Y}{m}$ all $k$-element subsets of $S$ have the same color in $\rho_{S \cup M(S)}$. 
 Therefore, $M$ is $k$-monochromatic and the result follows.
 \end{proof}

 \subsection{The induction}\label{s:induction}
 
 \begin{proof}[Proof of Proposition \ref{p:ccm}]
We proceed by induction on $\dim K$, similarly as in \cite{hb17}. If the reader finds the current exposition too fast, we encourage him/her to consult \cite{hb17} which goes slower and shows motivation and necessity of some ideas presented here. Note however, that our current setup is  more general as it works for an arbitrary \emph{closure space} (for definition see 
 Section \ref{ss:convexvsclosed}). In the following text we repeatedly use that a simplex (as a face of an abstract simplicial complex) is given by its set of vertices.  We thus implicitly identify a simplex with its set of vertices (and vice versa).

\heading{Induction basis.}
If $K$ is $0$-dimensional with vertices $V(K)=\{v_1,\ldots, v_m\}$, we set $r_K(b)=m$. If $P=\{x_1,\ldots, x_n\}$ is a point set in $\mathbf{R}$ with $|P|\geq m$, we can take as $\Phi$ the map $\Phi(v_i) = \{x_i\}$.
It remains to define $\gamma$. We want it to ``map'' $v_i$ to $x_i$.
However, $\gamma$ should be a chain map from simplicial chains of $K$ to singular chains in $\R^d$. Therefore for each vertex $v_i$ we define $\gamma(v_i)$ as the unique map from
$\Delta_0$ to $x_i$ (here we consider $\Delta_n$ to be a geometric simplex); and extend this definition linearly to the whole $C_0(K)$. By construction, $\gamma$ is nontrivial and constrained by $(\F,\Phi)$.

\heading{Induction step.}
Let $\dim K = k \geq 1$.
The aim is to find a chain map $\gamma \colon C_*(K^{(k-1)}) \to C_*(\mathbf{R})$ and a suitable map $\Phi$ such that $\gamma$ is nontrivial, constrained by $(\F,\Phi)$ and $\gamma(\partial \sigma)$ has trivial homology inside $\conv_\F\Phi(\sigma)$ for each $k$-simplex $\sigma \in K$. Extending such $\gamma$ to the whole complex $K$ is then straightforward.

 Let $s \geq 1$ be some integer depending on $K$ which we determine later. 
To construct $\gamma$ we will define three auxiliary chain maps
\[ C_*\pth{K^{(k-1)}} \quad \xrightarrow{\makebox[2em]{$\alpha$}}
\quad C_*\pth{\skel{k-1}{(\sd K)}} \quad
\xrightarrow{\makebox[3em]{$\beta$}} \quad
C_*\pth{\skelsim{k-1}{s}} \xrightarrow{\makebox[2em]{$\gamma'$}} \quad
C_*(\mathbf{R}),\]
where $\sd K$ is the barycentric subdivision of $K$.

\heading{Definition of $\alpha$.}
We start with the easiest map, $\alpha$. 
It maps each $l$-simplex $\sigma$ from $K^{(k-1)}$ to the sum of the $l$-simplices in the barycentric subdivision of $\sigma$. 
 
 \heading{Definition of $\gamma'$.} The map $\gamma'$ is obtained from induction.
 Let the cardinality of $P$ be large enough. Since $\dim \skelsim{k-1}{s} = k-1$, by induction hypothesis, there is a nontrivial chain map $\gamma':C_*(\skelsim{k-1}{s}) \to C_*(\mathbf{R})$ and a map $\Psi_0\colon \skelsim{k-1}{s}\to 2^P$ such that $\gamma'$ is constrained by $(\F,\Psi_0)$. 
 
 For each simplex $\sigma\in \Delta_s$ we define
 
 \begin{equation}\label{eq:psi} \Psi(\sigma)=\bigcup_{\tau
  \in \skelsim{k-1}{s}, \tau \subseteq \sigma}\Psi_0(\tau).
  \end{equation}
  
  First note that $\Psi$ extends $\Psi_0$. Indeed, for $\sigma \in \skelsim{k-1}{s}$, 
  $
    \Psi(\sigma) = \bigcup_{\tau \subseteq \sigma}\Psi_0(\tau) = \Psi_0(\sigma),
   $
where the last equality follows from the fact that  $\Psi_0$ is monotone (see Remark \ref{r:monotonicity}) so the union is equal to the largest term (with respect to inclusion).
  Moreover, easy calculation shows that 
    $\Psi(A) \cap
  \Psi(B) = \Psi(A \cap B)$ for any $A, B \subseteq V(\Delta_s)$.

 \heading{Definition of $\beta$.} 
 With the help of Proposition~\ref{p:ramsey_selection} it is now easy to find the map $\beta$. 
 Indeed, for each simplex $\tau\in\Delta_s$, let $\rho_\tau$ be the coloring that assigns to each $k$-simplex $\sigma\subseteq\tau$ the singular homology class of $\gamma'(\partial\sigma)$ inside $\conv_{\mathcal F}(\Psi(\tau))$.
 Let $m$ be the number of vertices of $\sd \Delta_k$, $n$ the number of vertices of $\sd K$
 and $c$ the maximal number of elements in $\widetilde H_k(\bigcap \mathcal G;\Z_2)$, where $\mathcal G\subseteq\F$. Clearly $c\leq 2^b$. 
 
 Thus if $s\geq N_{k+1}(n;m;c)$ from Proposition~\ref{p:ramsey_selection}, the following holds:
 
 \begin{enumerate}
 \item There is an inclusion $j$ of
 $\left(\sd K\right)^{(k-1)}$ to a simplex $Y\subseteq\Delta_s$. We let $\varphi\colon K\to 2^{V(\Delta_s)}$ be the map that to each $\sigma\in K$ assigns the set $j(V(\sd \sigma))$. In particular, $\varphi(\sigma) \subseteq Y$ for each $\sigma \in K$.\\
 \item For each $k$-simplex $\mu$ in $K$ there is a simplex $\widehat\mu := M(\sd \mu)$ in $\Delta_s$, where $M: \binom{Y}{m} \to 2^{V(\Delta_s)\setminus Y}$ is the strongly injective $(k+1)$-monochromatic map given by Proposition \ref{p:ramsey_selection}. The following three properties of $\widehat \mu$ are satisfied:\\ %{\color{red}$M_{\mu}$ in $\Delta_s$ with the following three properties:}
 
 \begin{enumerate}[label=(\roman*)]\label{e:prop}
    \item \emph{((k+1)-monochromaticity)} For all $k$-simplices $\tau$ in $\sd \mu$,   the singular homology class of $\gamma'(j(\partial\tau))$ inside $\conv_{\mathcal F}\Psi(\widehat\mu\cup \varphi(\mu))$ is the same,\\
  \item each $\widehat\mu$ is disjoint from $Y$, \label{e:disjoint_MY}\\
  \item \emph{(strong injectivity)} all the simplices $\widehat\mu$ are mutually disjoint.\label{e:disjoint_M}\\
 \end{enumerate}
 \end{enumerate}
 
 We define $\widehat\mu:=\emptyset$ for $\mu \in K$ a simplex of dimension at most $k-1$. We set $\Phi(\mu):=\Psi(\widehat\mu\cup \varphi(\mu))$.
 Note that for a simplex $\sigma\in K^{(k-1)}$, $\Phi(\sigma)$ reduces to $\Psi(\varphi(\sigma))$.
 
Let $\beta$ be the chain map induced by $j$.
 Observe that $\Phi$ satisfies $\Phi(\emptyset)=\emptyset$ and $\Phi(A\cap B)=\Phi(A)\cap\Phi(B)$, $A,B\in K$. Indeed, the first claim is obvious and for the second one let $\mu, \tau$ be distinct simplices in $K$:
 \begin{align*}
   \Phi(\mu) \cap \Phi(\tau) &= \Psi\left(\widehat\mu \cup \varphi(\mu)\right) \cap \Psi\left(\widehat\tau \cup \varphi(\tau)\right) = \Psi\left([\widehat\mu \cup \varphi(\mu)] \cap [\widehat\tau \cup \varphi(\tau)]\right) \\
   &= \Psi(\varphi(\mu) \cap \varphi(\tau)),
 \end{align*}
 where the second equality express the fact that $\Psi$ respects intersections and the last equality uses \ref{e:prop}\ref{e:disjoint_MY}, \ref{e:prop}\ref{e:disjoint_M} and the fact that $\varphi(\mu) \subseteq Y$ for every $\mu \in K$. Then
 \[
  \Phi(\mu) \cap \Phi(\tau) = \Psi(\varphi(\mu) \cap \varphi(\tau))=  \Psi (\varphi(\mu \cap \tau)) = \Phi(\mu \cap \tau)
 \]
since $\varphi$ obviously respects intersections and $\dim (\mu \cap \tau) \leq k-1$.
 
 \medskip
 We define $\gamma$ on $K^{(k-1)}$ as the composition $\gamma'\circ \beta\circ\alpha$.
 Then, by the definition, $\gamma$ is a nontrivial chain map constrained by $(\F, \Phi)$.
 
 As the last step, we extend $\gamma$ to the whole complex $K$.
 If $\sigma$ is a $k$-simplex of $K$,
 all the $k$-simplices $\zeta$ in $\sd\sigma$ have the same value of $\gamma'\beta(\partial\zeta)$ inside $\conv_\F\Phi(\sigma)$. Since there is an even number of them and we work with $\Z_2$-coefficients, $\gamma(\partial\sigma)$ has trivial homology inside $\conv_{\F}\Phi(\sigma)$. So for each such $\sigma$ we may pick some $\gamma_\sigma\in C_k\left(\conv_{\F}\Phi(\sigma);\Z_2\right)$ such that $\partial\gamma_\sigma=\gamma(\partial\sigma)$ and extend $\gamma$ by setting $\gamma(\sigma):=\gamma_\sigma$.
 Then, by definition, $\gamma$ is a non-trivial chain map from $C_*(K;\Z_2)$ to $C_*(\mathbf{R};\Z_2)$ constrained by $(\F,\Phi)$ and hence by $(\F,P)$.
 \bigskip
 
 It remains to show that if $\dim K\leq 1$, we can take $\gamma$ that is induced by a continuous map $f \colon |K| \to \mathbf{R}$.
 If $\dim K=0$, we map each point to a point, so the statement is obviously true.
 
 If $\dim K=1$, we inspect the composition $\gamma=\gamma'\circ \beta\circ \alpha$. 
 It maps points of $K$ to points in $\mathbf{R}$
 in such a way that the homology class of $\gamma(\partial \tau)$ inside $\conv_{\F}(\Psi(\tau))$ is trivial for each edge $\tau$ of $K$. But this means that the endpoints of $\tau$ get mapped to points in the same path-connected component of $\conv_{\F}(\Psi(\tau))$ and can be connected by an actual path.
\end{proof}

%-------------------------------------------------------------------------------------------------
\section{A fractional Helly theorem on surfaces}\label{s:pq_surface}
In order to prove Theorem \ref{t:frachelly_surfaces}, we need to bring the constant $m$ from Theorem \ref{t:non-optimal-frachelly} (applied to a surface $S$) down to three for $b=0$ and to $2b+4$ for $b \geq 1$, respectively.
The presented  method is based on the recent result of Kalai and the author \cite{planar_sets} and allows us to significantly decrease $m$ to a small value as soon as we have a finite upper bound.

We need few definitions first. Let $\A=\{A_1,\ldots,A_n\}$ be subsets of a surface $S$. Set $A_I = \bigcap_{i \in I}A_i$ and let $N(\A) = \{I \in [n] \colon A_I \neq \emptyset\}$ be the nerve of $\A$. We put $f_k(\A) = |\{I \in N(\A) \colon |I| = k+1\}|$.  In words, $f_k$ counts the number of intersecting $(k+1)$-tuples from $\A$. 

The main tool for the bootstrapping is the following proposition.
%-------------------------------------------------------------------------------------------------
\begin{proposition}\label{p:bootstraping}
 Let $b \geq 0$ and $k \geq 2$ be integers satisfying that for  $b=0$, $k \geq 2$, and for $b \geq 1$, $k > 2b + 2$, respectively. Let $S$ be a surface.
 Then for every $\alpha\in (0,1)$ there exists $\alpha'=\alpha'(\alpha,b,k,S)> 0$ such that for any sufficiently large family $\A$ of $n$ open sets in $S$
 with $HC_1(\A) \leq b$ and every integer $r\geq 1$, the following holds:
\begin{equation}
 f_k(\A) \geq \alpha\binom{n}{k+1} \quad \Rightarrow \quad f_{k+r}(\A) \geq \alpha'\binom{n}{k+r+1}. \label{eq:bootstrap}
\end{equation}
\end{proposition}
%-------------------------------------------------------------------------------------------------
\begin{proof}[Proof of Theorem \ref{t:frachelly_surfaces} assuming Proposition \ref{p:bootstraping}]
Let $b \geq 0$ and let $k_0 = k_0(b)$ be an integer depending on $b$. Namely, we set $k_0(0)=3$ and $k_0(b) = 2b+4$ for $b \geq 1$. Let $m$ be the value from the (non-optimal) fractional Helly theorem (Theorem \ref{t:non-optimal-frachelly}). We assume that $m \geq k_0 + 1$, otherwise we  just take  a minimal integer which satisfies both conditions.
By Proposition \ref{p:bootstraping} ($k=k_0-1, r = m-k_0$) we get that if at least an $\alpha$-fraction of all $k_0$-tuples intersect, then also some $\alpha'$-fraction of all $m$-tuples intersect. 
Theorem \ref{t:non-optimal-frachelly} now implies that there is a point common to some $\beta$-fraction of all the sets. This finishes the proof.
\end{proof}

It remains to prove  Proposition \ref{p:bootstraping}. As mentioned, this proof heavily relies on \cite[Theorem 2.3]{planar_sets}, which can be reformulated, for open sets and in terms of bounded homological complexity, as follows:
%-------------------------------------------------------------------------------------------------
\begin{theorem}[{\cite{planar_sets}}]\label{t:GZ}
 Let $S$ be a surface, $b \geq 0$ an integer and let $k = k(b)$ be an integer depending on $b$, namely $k(0) \geq 2$ and $k(b)  > 2b+2$ for $b \geq 1$. 
 Then there exist constants $c_1 > 0, c_2 \geq 0$ depending only on $k,b$ and $S$ such that the following holds.
 Let  $\A$ be a finite family of open sets in $S$ with $HC_1(\A) \leq b$. Then
 \[  f_{k}(\A) > c_1f_{k-1}(\A) + c_2 \quad \Rightarrow \quad f_{k+1}(\A) > 0.\]
\end{theorem}
%-------------------------------------------------------------------------------------------------

Since a nerve of any family is a simplicial complex, hence a hypergraph, we will recall some useful facts about hypergraphs.
\heading{Hypergraphs.}
A hypergraph is  \emph{$\ell$-uniform} if all its edges have size $\ell$.
A hypergraph is \emph{$\ell$-partite}, if its vertex set $V$ can be partitioned into $\ell$ subsets $V_1, \ldots, V_\ell$,
called \emph{classes}, so that each edge contains at most one point from each $V_i$.
Let $K^\ell(t)$ denote the complete $\ell$-partite $\ell$-uniform hypergraph with $t$ vertices in each of its $\ell$ vertex classes.

We need the following theorem of Erd\H{o}s and Simonovits \cite{Erdos-Simonovits} about super-saturated hypergraphs (see also \cite[Chapter 9.2]{matousek_lectures}):
%-------------------------------------------------------------------------------------------------
\begin{theorem}[\cite{Erdos-Simonovits}]\label{t:erdos_simonovits}
 For any positive integers $\ell$ and $t$ and any $\varepsilon > 0$ there exists $\delta > 0$ with the following property:
Let $H$ be an $\ell$-uniform hypergraph on $n$ vertices and with at least $\varepsilon \binom{n}{\ell}$ edges. Then $H$
contains at least $ \delta n^{\ell t}$ copies  of $K^\ell(t)$.
\end{theorem}
%-------------------------------------------------------------------------------------------------
\begin{proof}[Proof of Proposition \ref{p:bootstraping}]
Let $\A = \{A_1,\ldots,A_n\} $ be a family of sets in $S$ satisfying the assumptions. Since $f_{k-1}(\A) \leq \binom{n}{k}$, Theorem \ref{t:GZ} gives
\begin{eqnarray}\label{e:quant}
   f_k(\A) > c_1 \binom{n}{k} + c_2 \quad \Rightarrow \quad f_{k+1}(\A) > 0
\end{eqnarray}
for some constants $c_1> 0, c_2 \geq 0$.

Note that it is enough to show (\ref{eq:bootstrap}) for $r=1$ as the rest can be obtained by induction.

 Let $H$ be a $(k+1)$-uniform hypergraph whose vertices and edges correspond to the vertices and $k$-simplices
 of the nerve $N$ of $\A$. By assumption $H$ has at least $\alpha \binom{n}{k+1}$ edges. Set
  \[
  t := \left\lceil \frac{c_1(k+1)^k}{k!}+c_2\right\rceil
 \]

By Theorem \ref{t:erdos_simonovits} ($\varepsilon=\alpha, \ell = k+1$),
 there is at least $\delta n^{(k+1)t}$ copies of $K^{k+1}(t)$ in $H$.

 Since $K^{k+1}(t)$ has $(k+1)t$ vertices and $t^{k+1}$ edges, it follows by (\ref{e:quant}) that for every copy of $K^{k+1}(t)$ in $H$
 there is an intersecting subfamily of size $k+2$ among the corresponding members of $\mathcal A$. 
 Indeed, the implication (\ref{e:quant}) translates into checking that for $k \geq 2$, $t \geq 1$,
 \begin{eqnarray*}
  t^{k+1} &>& c_1\binom{(k+1)t}{k} + c_2.
 \end{eqnarray*}
 
On the other hand, each such intersecting $(k+2)$-tuple is contained in at most $n^{(k+1)t-(k+2)}$ distinct copies of $K^{k+1}(t)$
 (we count the number of choices for the vertices not belonging to the considered $(k+2)$-tuple), and the result follows,
 i.e. $f_{k+1}(\A) \geq \delta n^{k+2} \geq \alpha' \binom{n}{k+2}$.
\end{proof}
%-------------------------------------------------------------------------------------------------

\section{Discussion}\label{s:discussion}
 
\subsection{Qualitative sharpness of Theorem \ref{t:boundedRadon}.}\label{ss:sharpness}
 \begin{example}\label{ex:sharpness}The bound on the Radon number provided by Theorem~\ref{t:boundedRadon} is extremly large as it is obtained by an iterative application of Ramsey theorem. Nevertheless, the result is optimal in the following sense:  all $\widetilde \beta_i$, $0 \leq i < \lceil \frac{d}{2}\rceil$ need to be bounded. We show that we can obtain arbitrarily large Radon number as soon as at least one $\widetilde \beta_i$ is unbounded. More precisely, we show that for every $0\leq k < \lceil \frac{d}{2}\rceil$ and all $n\in \N$ we can find a family $\F$, for which $\widetilde{\beta_i} \left(\bigcap \G\right)=0$ for all $i\neq k$ and all $\emptyset\neq\G\subseteq\F$, yet the Radon number of $\F$ is at least $n$. This shows that unless we bound all the first $\lceil d/2\rceil$ reduced Betti numbers, we cannot obtain a universal bound on $r(\F)$ that would be valid for all families $\F$  in $\mathbb R^d$. 
 Indeed, fix $0 \leq k < \lceil \frac{d}{2}\rceil$ and let $n$ be arbitrarily large.  
 Let $K$ be a realization of the $k$-skeleton of the $(n-1)$-dimensional simplex in $\R^d$ (see \cite[Section 1.6]{Matousek_borsuk_ulam}); more precisely, let $V(K) = \{v_1, \ldots, v_n\}$ be a set of $n$ points in general position in $\R^d$, then each subset $A \subseteq V(K)$ of size at most $k+1$ spans a geometric simplex $\conv A$. Here $\conv(\cdot)$ stands for the standard convex hull in $\R^d$.
 For each $i \in [n]$, let $F_i$ be the union of all simplices in $K$ not containing the vertex $v_i$, put $\F= \{F_i\colon i \in [n]\}$. We show that $r(\F) > n$.  More precisely, we show that for any two disjoint subsets $B_1, B_2 \subseteq V(K)$ we have $\conv_\F(B_1) \cap \conv_\F(B_2) = \emptyset$. Observe that for $C \subsetneq [n]$, $\conv_\F \{ v_i\colon i \in C \}$ is the induced subcomplex of $K$ spanned by the vertices $\{v_i \colon v_i \in C\}$ as $\conv_\F \{ v_i\colon i \in C \} = \bigcap_{i \in [n]\setminus C} F_i$. This implies that $r(\F) > n$ as in any partition of $V(K)$ the convex hulls of the parts cannot intersect.
 Given a subfamily $\G \subseteq \F$ of cardinality $m$ ($m \leq n$), $\bigcap \G$ is the $k$-skeleton of a simplex on $n-m$ vertices, and hence have all reduced Betti numbers equal to zero except in dimension $k$, where the reduced Betti number equals $\binom{n-m}{k+1}$. 
   
 This construction has been used before (\cite[Example 3]{hb17}) to show that all $\widetilde \beta_i$, $0 \leq i < \lceil \frac{d}{2}\rceil$, have to be bounded in order to have a bounded Helly number. 
 \end{example}

\subsection{$\mathbb Z_2$-coefficients and manifolds.}\label{ss:Z2-coefficients}
 A curious reader might wonder why do we compute over $\mathbb Z_2$. Could we possibly get a better bound using another ring like $\Z$ or $\Z_p$?

First reason is that so far the homological van Kampen-Flores theorem (Theorem \ref{t:homological_van_Kampen}) is proven only for $\Z_2$. Using ideas from \"Ozaydin's paper~\cite{ozaydin}, it would be possible to prove an analogous result over $\Z_p$, but then we would need to consider a $(1-1/p)d$-dimensional complex in Proposition~\ref{p:gen}.
Since raising the target dimension by one requires twofold application of Ramsey theorem, 
such procedure is unlikely to improve the resulting bound on the Radon number.
Moreover, one would need to restrict the homological complexity up to that dimension.
In other words, it would not be enough to bound the complexity only up to mid-level. Nevertheless, there still might be rare situations in which such approach results in slightly better bounds on Tverberg numbers
than the ones that follow from combination of~\cite[(6)]{jamison1981} or \cite[Thm 1.1]{domotor}, respectively, and our bound on the Radon number.

The second reason is that the choice of $\Z_2$ enables us to completely separate the Ramsey argument (that is, no homology is involved in Proposition \ref{p:ramsey_selection}). One big advantage of having the topological and combinatorial parts of the proof independent  is that once there is an extension of the homological van Kampen-Flores theorem to manifolds, we readily get most of the results in this paper also for families on manifolds.

\begin{problem}
 Is there a homological van Kampen-Flores theorem for manifolds?
\end{problem}

It might seem that the need for a homological van Kampen-Flores for manifolds can be overcome by the approach mentioned in Section~\ref{s:embeddability}, as we can often embed any reasonably nice $d$-manifold into $\R^{2d}$. For example, combining the embedding with Theorem \ref{t:non-optimal-frachelly} applied to $\mathcal F^\cap$ (see Section \ref{s:direct_conseq}, \textsection\ $(p,q)$-theorems), we get a $(p,q)$-theorem for manifolds. However, $p\geq m$ has to be sufficiently large as the bound $m$ on the fractional Helly number is large. Moreover, we need to bound homological complexity of $\F$ up to level $d$.
This brings a question, can we improve the bound on the Radon number?

\begin{problem}\label{p:Radon_bounds}
 Provide lower and upper bounds on the Radon number from Theorem \ref{t:boundedRadon}.
\end{problem}

This would be helpful not only from the perspective of the manifolds, but also for the fractional Helly theorems, as the dependency on the Radon number is hidden in the bound on $\beta$ in Theorem \ref{t:frachelly_surfaces}. Indeed, Radon number tells us how many steps in the bootstraping described in Section~\ref{s:pq_surface} we need to perform.

What our method gives, regarding Problem \ref{p:Radon_bounds}, are the following partial results.

\begin{theorem}
 For $\F \subseteq \R^d$ with $HC_{\lceil d/2\rceil}(\F) = 0$ 
 the Radon number $r(\F)$ is at most $d+3$.
 In case that $HC_d(\F) = 0$, we have that $r(\F) \leq d+2$. Moreover, the first bound is sharp for $d\geq 2$, whereas the second bound is sharp for $d\geq 0$.
\end{theorem}

\begin{proof}
 In both cases we proceed exactly as in the proof of Theorem \ref{t:boundedRadon},
 the only difference is that we do not need to use Ramsey theorem and barycentric subdivisions. 
 More precisely we identify the points of the constructed complex with the original point set $P$ and set $\Psi(\sigma)=\sigma$ for every face of the constructed complex.
 In the induction procedure, the boundary of each simplex $\partial\tau$ lies in the 
 set $\conv_\F(\sigma)$, which is homologically trivial and hence can be ``filled'' by some cycle.
 Thus in the first case we manage to construct a non-trivial chain map from $\skelsim{\lceil d/2\rceil}{d+2}$ to $\R^d$ constrained by $(\F, P)$. The rest of the proof is the same, we use Theorem \ref{t:homological_van_Kampen} in combination with Proposition \ref{p:gen}. 
 
 In the second case we construct a non-trivial chain map from the boundary of $(d+1)$-simplex (which is $\skelsim{d}{d+1}$) to $\R^d$ constrained by $(\F, P)$. By \cite[Lemma 15]{hb17},  this simplicial complex admits no homological almost-embedding into $\R^d$, so Proposition \ref{p:gen} yields the result.
 
 It remains to show the sharpness. The case $HC_d(\F) = 0$ is simple, take as $\F$ the family of all $(d-1)$-faces of the $d$-simplex $\Delta_d$. $\F$ clearly has a geometric realization in $\R^d$, $HC_d(\F) = 0$ as all sets are convex, and $r(\F)  > d+1$ as $\conv_\F G = \conv G$ for any $G \subsetneq V(\Delta_d)$. Note that this is the classical example that Radon theorem of standard convex sets is tight. 
 
 Now we construct a family $\F$ with $HC_{\lceil d/2\rceil}(\F) = 0$ and $r(\F) > d+2$. The construction and its analysis is  similar to the one in Example \ref{ex:sharpness}.
 Let $K$ be the $(d-1)$-skeleton of the $(d+1)$-simplex $\Delta_{d+1}$ on the vertex set $V(K) = \{v_1, \ldots, v_{d+2}\}$. 
 We show that $K$ can be realized in $\R^d$. We start with a (geometric) $d$-simplex $\Delta_d$ in  $\R^d$, say with vertices $V(K) \setminus \{v_{d+2}\}$. Now we consider the barycenter $v_{d+2}$ of $\Delta_d$ and take the $(d-1)$-skeleton of \emph{stellar subdivision} $(\partial \Delta_d * v_{d+2})$. What we get is a realization of $K$ in $\R^d$.
 Let $\F = \{F_i \colon i \in [d+2]\}$, where $F_i$ is a union of all faces of $K$ not containing the vertex $v_i$. In other words, $F_i$'s are the boundaries of $d$-faces of $\Delta_{d+1}$.  Note that  $r(\F) > d+2$ by exactly the same reasons as before.
 Given a subfamily $\G \subseteq \F$ of cardinality $m \geq 2$, $\bigcap \G$ is a $(d+1-m)$-simplex, hence it has all reduced Betti numbers trivial. If $|\G| = 1$, i.e. $\G =\{F_i\}$, we have that all $\tilde\beta_j(F_i) = 0$ for $0\leq j \leq d-2$ and $\tilde\beta_{d-1}(F_i) = 1$ as $F_i$ is the boundary of $d$-simplex. As $d > \lceil d/2\rceil$ for $d \geq 2$, we conclude that $HC_{\lceil d/2\rceil}(\F) = 0$.  
\end{proof}

\subsection{Relation to convexity spaces}\label{ss:convexvsclosed}
The proofs of Theorems \ref{t:non-optimal-frachelly}, \ref{t:fract_helly}, and \ref{t:pq_surface} rely on a fractional Helly theorem \cite[Thm 1.1]{boundedRadon_fractHelly}, which is formulated for \emph{convexity spaces}. The same concerns the implications that bounded Radon number implies bounded Tverberg number \cite{jamison1981, domotor}
as well as weak colorful Helly theorem \cite[Thm 2.2]{boundedRadon_fractHelly}. However, for a family $\mathcal F$ of subsets of $X$, a family $\mathcal C'_\mathcal F:= \{\conv_\mathcal F(S): S \subseteq X\} \cup \{\emptyset\}$ only gives a \emph{closure space} (defined below). Nevertheless, we show on examples \cite[Thm 1.1]{boundedRadon_fractHelly}, \cite[(6)]{jamison1981} that the relevant theorems  also hold in the more general setting of closure spaces. 

We start with definitions. A \emph{convexity space} is a pair $(X, \mathcal C)$, where $X$ is any set of points and $\mathcal C$ is any family over $X$ that (i) contains $\emptyset, X$, (ii) is closed under (arbitrary) intersection and (iii) is closed under (arbitrary) union of nested sets. A pair $(X,\mathcal C')$ satisfying only (i) and (ii) is called a \emph{closure space} and the elements of $\mathcal C'$ are called \emph{closed} sets \cite{vel93}. 
The definition of Radon number naturally applies to a convexity/closure space $(X,\mathcal C$), being defined as $r(\mathcal C)$. Note that the Radon number of a family $\mathcal F$  of subsets of $X$ equals to the Radon number of the corresponding closure space $(X, \mathcal C'_\mathcal F)$, where $\mathcal C'_\mathcal F$ is defined above.

When the underlying space $X$ is clear, we write only $\mathcal C$ or $\mathcal C'$ to denote a convexity or a closure space, respectively.  We note that if a family $\mathcal C'$ of closed sets is finite, then $\mathcal C'$ is in fact a convexity space, as the condition (iii) is trivially satisfied.
Furthermore, it is not difficult to see that in each of the results (\cite[Thm 1.1]{boundedRadon_fractHelly} and \cite{jamison1981, domotor}), we only deal with finite families $\mathcal C'$. 

Let us illustrate it first on the fractional Helly theorem (\cite[Thm 1.1]{boundedRadon_fractHelly}:
Suppose that we know that for each convexity space $(X,\mathcal C)$
whose Radon number is at most $r$, the fractional Helly number is at most~$D$.
We recall that it means that for every $\alpha \in (0,1)$ there is $\beta>0$ such that the following holds: for every finite subfamily $\mathcal G\subseteq \mathcal C$, 
if an $\alpha$-fraction of all the $D$-tuples of $\mathcal G$ intersect, then there is a point contained in at least a $\beta$-fraction of all the sets in $\mathcal G$.
So let $(X,\C')$ be a closure space with Radon number $r'$
and let $\G$ be a finite subfamily of $\C'$.
Suppose that an $\alpha$-fraction of all the $D$-tuples of $\G$ intersect.
Then, since $\mathcal G \subseteq \mathcal C'$ and
$\conv_\mathcal G S \supseteq \conv_{\mathcal C'} S$, we have $r(\mathcal G)\leq r(\mathcal C') = r'$ (in other words, Radon number is monotone). 
Since the closure space generated by $\G$ contains only finitely many closed sets, it is a convexity space, thus by \cite[Thm 1.1]{boundedRadon_fractHelly}
there is a point common to $\beta$-fraction of all the sets in $\G$.
Since this is true for any finite family $\G$, this means that the fractional Helly number of the whole system $\mathcal C'$ is at most~$D$.

In order to show that bounded Radon number for a closure space implies bounded Tverberg numbers, the argument is similar. However, it is not apparently clear how to reduce the statement to a convexity space with finitely many closed sets, so let us present the argument how this is done: Suppose that we know that any convexity space whose Radon number is at most~$r$ has $k$th  Tverberg number $D$. Then if $(X,\mathcal C)$ is a closure space and $P$ is a set of $D$ points in $X$, we consider the closure space $(X, \mathcal D)$, where $\mathcal D := \{\conv_{\mathcal C}S: S \subseteq P\} $. By monotonicity, the Radon number $r(\mathcal D)$ is at most~$r$. Moreover, it is a finite closure space, thus a convexity space. Therefore, by our assumption, there are $k$ disjoint subsets $A_1,\ldots A_k$ of $P$ with $\bigcap_{i=1}^k\conv_{\mathcal D}A_i\neq\emptyset$. But, by our choice of $\mathcal D$, $\conv_{\mathcal D}A_i=\conv_{\mathcal C}A_i$. 
Since such sets $A_i$ can be found for any set $P \subseteq X$ of $D$ points,
the original closure space has $k$th Tverberg number at most $D$.

\section*{Funding} 
The work was supported by the GA\v CR grant no. 22-19073S and also by the Charles University projects UNCE/SCI/022 and PRIMUS/21/SCI/014.

 \section*{Acknowledgements} 
 I am very grateful to Pavel Pat\'ak for numerous discussions, valuable suggestions and proofreadings. Many thanks to Xavier Goaoc for his feedback as well as to  an anonymous referee for useful comments and recommendations, which were helpful in improving the overall presentation. A preliminary version of this work appeared in the proceedings of the 36th Symposium on Computational Geometry \cite{socg}.
 
\bibliographystyle{alpha}

\begin{thebibliography}{00}

\bibitem{amenta}
N.~Amenta, M.~Bern, D.~Eppstein, and S.-H. Teng.
\newblock Regression depth and center points.
\newblock {\em Discrete Comput. Geom.}, 23(3):305--323, 2000.

\bibitem{alon-kleitman}
N.~Alon and D.~J. Kleitman.
\newblock Piercing convex sets and the {H}adwiger-{D}ebrunner
  {$(p,q)$}-problem.
\newblock {\em Adv. Math.}, 96(1):103--112, 1992.

\bibitem{transversal-hypergraph}
N.~Alon, G.~Kalai, J.~Matou\v{s}ek, and R.~Meshulam.
\newblock Transversal numbers for hypergraphs arising in geometry.
\newblock {\em Adv. in Appl. Math.}, 29:79--101, 2002.

\bibitem{amenta2}
N.~Amenta.
\newblock Helly-type theorems and generalized linear programming.
\newblock {\em Discrete Comput. Geom.}, 12(3):241--261, 1994.

\bibitem{BB}
E.~G. Bajm\'{o}czy and I.~B\'{a}r\'{a}ny.
\newblock On a common generalization of {B}orsuk's and {R}adon's theorem.
\newblock {\em Acta Math. Acad. Sci. Hungar.}, 34:347--350, 1979.

\bibitem{Barratt_Milnor}
M.~G. Barratt and J.~Milnor.
\newblock An example of anomalous singular homology.
\newblock {\em Proc. Amer. Math. Soc.}, 13:293--297, 1962.

\bibitem{Brosowski}
B.~Brosowski.
\newblock \"{U}ber {E}xtremalsignaturen linearer {P}olynome in {$n$}
  {V}er\"{a}nderlichen.
\newblock {\em Numer. Math.}, 7:396--405, 1965.

\bibitem{Bukh}
B.~Bukh.
\newblock {Radon partitions in convexity spaces}.
\newblock \url{http://arxiv.org/abs/1009.2384}, 2010.

\bibitem{clarkson}
K.~L. Clarkson, D.~Eppstein, G.~L. Miller, C.~Sturtivant, and S.-H. Teng.
\newblock Approximating center points with iterative {R}adon points.
\newblock {\em Internat. J. Comput. Geom. Appl.}, 6(3):357--377, 1996.

\bibitem{survey}
J.~De~Loera, X.~Goaoc, F.~Meunier, and N.~Mustafa.
\newblock The discrete yet ubiquitous theorems of {C}arath\'eodory, {H}elly,
  {S}perner, {T}ucker, and {T}verberg.
\newblock {\em Bull. Amer. Math. Soc. (N.S.)}, 56(3):415--511, 2019.

\bibitem{eckhoff_polytopes}
J.~Eckhoff.
\newblock On a class of convex polytopes.
\newblock {\em Israel J. Math.}, 23(3-4):332--336, 1976.

\bibitem{eckhoff}
J.~Eckhoff.
\newblock Helly, {R}adon, and {C}arath\'{e}odory type theorems.
\newblock In {\em Handbook of convex geometry, {P}art {A}}, pages 389--448.
  North-Holland, Amsterdam, 1993.

\bibitem{partition_conjecture}
J.~Eckhoff.
\newblock The partition conjecture.
\newblock {\em Discrete Math.}, 221(1):61--78, 2000.

\bibitem{Erdos-Simonovits}
P.~Erd\H{o}s and M.~Simonovits.
\newblock Supersaturated graphs and hypergraphs.
\newblock {\em Combinatorica}, 3(2):181--192, 1983.

\bibitem{rk:genus}
R.~Fulek and J.~Kyn\v{c}l.
\newblock The {$\mathbb Z_2$}-genus of {K}uratowski minors.
\newblock {\em Discrete Comput. Geom.}, 68:425--447, 2022.

\bibitem{kuhnel_our}
X.~Goaoc, I.~Mabillard, P.~Pat\'{a}k, Z.~Pat\'{a}kov\'{a}, M.~Tancer, and
  U.~Wagner.
\newblock On generalized {H}eawood inequalities for manifolds: a van
  {K}ampen--{F}lores-type nonembeddability result.
\newblock {\em Israel J. Math.}, 222(2):841--866, 2017.

\bibitem{hb17}
X.~Goaoc, P.~Pat\'{a}k, Z.~Pat\'{a}kov\'{a}, M.~Tancer, and U.~Wagner.
\newblock Bounding {H}elly numbers via {B}etti numbers.
\newblock In {\em A {J}ourney {T}hrough {D}iscrete {M}athematics}, pages 407--447.
  Springer, Cham, 2017.

\bibitem{nerves_minors}
A.~Holmsen, M.~Kim, and S.~Lee.
\newblock Nerves, minors, and piercing numbers.
\newblock {\em Trans. Amer. Math. Soc.}, 371:8755–8779, 2019.

\bibitem{boundedRadon_fractHelly}
A.~F. Holmsen and D.~Lee.
\newblock Radon numbers and the fractional {H}elly theorem.
\newblock {\em Israel J. Math.}, 241(1):433--447, 2021.

\bibitem{jamison1981}
R.~E. Jamison-Waldner.
\newblock Partition numbers for trees and ordered sets.
\newblock {\em Pacific J. Math.}, 96(1):115--140, 1981.

\bibitem{machine-learning}
M.~Kamp, M.~Boley, O.~Missura, and T.~G\"{a}rtner.
\newblock Effective parallelisation for machine learning.
\newblock In NeurIPS Proceedings: {\em Advances in Neural Information
  Processing Systems}, vol.~30, 2017.

\bibitem{KK}
B.~Kind and P.~Kleinschmidt.
\newblock On the maximal volume of convex bodies with few vertices.
\newblock {\em J. Combin. Theory Ser. A}, 21(1):124--128, 1976.

\bibitem{planar_sets}
G.~Kalai and Z.~Pat\'{a}kov\'{a}.
\newblock Intersection patterns of planar sets.
\newblock {\em Discrete Comput. Geom.}, 64(2):304--323, 2020.

\bibitem{Levi1951}
F.~W. Levi.
\newblock On {H}elly's theorem and the axioms of convexity.
\newblock {\em J. Indian Math. Soc.}, 15(0):65--76, 1951.

\bibitem{matousek_lectures}
J.~Matou\v{s}ek.
\newblock {\em Lectures on discrete geometry}.
\newblock Springer-Verlag, New York, 2002.

\bibitem{Matousek_borsuk_ulam}
J.~Matou\v{s}ek.
\newblock {\em Using the {B}orsuk-{U}lam theorem}.
\newblock Springer-Verlag, Berlin, 2003.

\bibitem{Milnor}
J.~Milnor.
\newblock On the {B}etti numbers of real varieties.
\newblock {\em Proc. Amer. Math. Soc.}, 15:275--280, 1964.

\bibitem{Miller2010}
G.~L. Miller and D.~R. Sheehy.
\newblock Approximate centerpoints with proofs.
\newblock {\em Comput. Geom.}, 43(8):647--654, 2010.

\bibitem{ozaydin}
M.~\"{O}zaydin.
\newblock {Equivariant maps for the symmetric group}.
\newblock \url{http://digital.library.wisc.edu/1793/63829}, 1987.

\bibitem{Onn}
S.~Onn.
\newblock On the geometry and computational complexity of {R}adon partitions in
  the integer lattice.
\newblock {\em SIAM J. Discrete Math.}, 4(3):436--446, 1991.

\bibitem{Patak_Tancer}
P.~Pat{\'a}k and M.~Tancer.
\newblock {Embeddings of $k$-complexes into $2k$-manifolds}.
\newblock {\em Discrete Comput. Geom.}, 2023.


\bibitem{socg}
Z.~Pat{\'a}kov{\'a}.
\newblock {Bounding {R}adon number via {B}etti numbers}.
\newblock {In {\em 36th International Symposium on Computational Geometry (SoCG 2020)}. Leibniz International Proceedings in Informatics (LIPIcs), 164: 61:1-61:13, 2020.}

\bibitem{domotor}
D.~Pálvölgyi.
\newblock Radon numbers grow linearly.
\newblock {\em Discrete Comput. Geom.}, 68(1):165--171, 2022.

\bibitem{QueyranneTardella}
M.~Queyranne and F.~Tardella.
\newblock Carath\'{e}odory, {H}elly, and {R}adon numbers for sublattice and
  related convexities.
\newblock {\em Math. Oper. Res.}, 42(2):495--516, 2017.

\bibitem{Radon1921}
J.~Radon.
\newblock Mengen konvexer {K}{\"o}rper, die einen gemeinsamen {P}unkt
  enthalten.
\newblock {\em Math. Ann.}, 83(1):113--115, Mar 1921.

\bibitem{ramsey30}
F.~P. Ramsey.
\newblock On a problem in formal logic.
\newblock {\em Proc. London Math. Soc.}, 30:264---286, 1929.

\bibitem{tverberg1}
H.~Tverberg.
\newblock A generalization of {R}adon's theorem.
\newblock {\em J. London Math. Soc.}, 41:123--128, 1966.

\bibitem{vel93}
M.~L.~J. van~de Vel.
\newblock {\em Theory of convex structures}, volume~50 of North-Holland
  Mathematical Library.
\newblock North-Holland Publishing Co., Amsterdam, 1993.

\bibitem{whitneyimbedding}
H.~Whitney.
\newblock The self-intersections of a smooth $n$-manifold in $2n$-space.
\newblock {\em Ann. of Math.}, 45(2):220--246, 1944.

\end{thebibliography}

\end{document}